\newcommand{\Aut}{\textup{Aut}}
\newcommand{\Out}{\textrm{Out}}
\newcommand{\trace}{\textrm{tr}}
\newcommand{\fix}{\textrm{Fix}}
\newcommand{\SL}{\textrm{SL}}
\newcommand{\GL}{\textrm{GL}}
\def\imod#1{\allowbreak\mkern10mu({\operator@font mod}\,\,#1)}
\newtheorem{theorem}{Theorem}[section]
\newtheorem{lemma}[theorem]{Lemma}
\newtheorem{corollary}[theorem]{Corollary}
\newtheorem*{theorem*}{Theorem}
\theoremstyle{definition}
\newtheorem{remark}[theorem]{Remark}
\numberwithin{equation}{section}
\newcommand{\ignore}[1]{}
\newcommand{\mynote}[1]{}
\begin{document}
\setcounter{section}{0}
\title{Twisted conjugacy in classical groups over certain domains of Characteristic $p>0$}
\author{Shripad M. Garge}
\address{SMG: Department of Mathematics, Indian Institute of Technology, Powai, Mumbai, 400 076 India.}
\email{shripad@math.iitb.ac.in, smgarge@gmail.com}
\author{Oorna Mitra.}
\address{OM: CMI, Chennai} 
\email{oornam@cmi.ac.in, urna.mitra@gmail.com}
\keywords{Twisted conjugacy, classical groups, automorphisms of Chevalley groups}


\begin{abstract}
Let $F$ be a subfield of the algebraic closure of a finite field $\mathbb{F}_p$, $p \ne 2$, and let $R$ denote any ring such that $F[t] \subset R \subsetneq F(t)$. 
Let $G$ be a classical Chevalley group of adjoint type defined over $R$. 
We prove that the group $G(R)$ has the $R_{\infty}$-property. 
\end{abstract}

\maketitle
\section{Introduction}

Let $G$ be a group and $\phi : G \to G$ be an automorphism. 
Then there is an action of $G$ on itself given by $g.x=gx\phi(g^{-1})$. 
This action is called the $\phi$-twisted conjugation action of $G$. 
The orbits of this action are called the $\phi$-twisted conjugacy classes. 
The number of the orbits is called the Reidemeister number of $\phi$ and is denoted by $R(\phi)$. 
If $R(\phi)$ is infinite for every automorphism $\phi$ of $G$, then $G$ is said to have the $R_\infty$-property and is called an $R_\infty$-group.

The notion of twisted conjugacy of groups first appeared in Nielsen fixed point theory. 
The question of which classes of groups possess $R_\infty$-property was formulated by Fel'shtyn and Hill in 1994 \cite{felshtyn-hill}. 
This and other related questions have been an active area of research for past few decades, of which one of the first general results was obtained by A. Fel'shtyn, G. Levitt and M. Lustig; they proved that non-elementary Gromov hyperbolic groups possess the $R_\infty$-property \cite{Fel, LL}. 
Since then many classes of groups have been classified according to whether or not they have the $R_\infty$-property.

In this paper, we address this question for classical Chevalley groups of adjoint type over rings that contain the polynomial algebra $F[t]$ and is properly contained in the rational function field $F(t)$, where $F$ is a subfield of the algebraic closure of $\mathbb{F}_p$.
Thus we build upon and generalise the results in \cite{MP} by Mitra and Sankaran, where the $R_\infty$-property of the general and special linear groups over the same algebras has been shown. 

Various other authors have studied $R_\infty$-property of linear groups. 
See \cite{FN}, \cite{Y}, \cite{ms-cmb}, \cite{ms-tg}, \cite{T}, \cite{N}, \cite{T1}. Nasybullov \cite{N} and Felshtyn-Nasybullov \cite{FN} proved that that a Chevally group of classical type over an algebraically closed field F of characteristic zero has the $R_\infty$-property
if and only if $F$ has finite transcendence degree over $\mathbb{Q}$. 
It follows from a classical theorem by Steinberg \cite{steinberg} that any connected linear algebraic group over an algebraically closed field of characteristic $p>0$ does not have the $R_\infty$-property. 
In contrast to this, our result shows the existence of linear algebraic groups over integral domains of characteristic $p>0$ having the $R_\infty$-property.

Throughout this paper, we shall denote by $R$ any ring with $F[t]\subset R \subsetneq F(t)$, where $F[t]$ is the polynomial algebra and $F(t)$ is the rational function field in one variable $t$ where $F$ is a subfield of $\bar{\mathbb F}_p$, the algebraic closure of the field $\mathbb F_p$, $p$ a prime.
Let $q=p^e, e\ge 1$. The main result of this paper is the following.

\begin{theorem} \label{MainThm}
Let $F$ be a subfield of $\bar{\mathbb F}_p$ with $p \neq 2$ and let $R$ be a ring such that $F[t] \subseteq R \subsetneq F(t)$.
Let $G$ be a simple classical Chevalley group of adjoint type of rank $>1$ defined over $R$. 

If $\mathsf{G}$ is a group such that $\mathsf{G}/\mathsf{N} \stackrel{\sim}{\rightarrow} G(R)$ for a characteristic subgroup $\mathsf{N}$ of $\mathsf{G}$ then $\mathsf{G}$ has the $R_\infty$-property.
\end{theorem}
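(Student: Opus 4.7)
\emph{Reduction to $G(R)$.} The plan begins by reducing the statement about $\mathsf G$ to the $R_\infty$-property of $G(R)$ itself. Since $\mathsf N$ is characteristic in $\mathsf G$, every $\Phi\in\Aut(\mathsf G)$ preserves $\mathsf N$ and descends to an automorphism $\bar\Phi$ of $\mathsf G/\mathsf N\cong G(R)$. The surjection $\pi\colon\mathsf G\twoheadrightarrow G(R)$ then induces a surjection of $\Phi$-twisted conjugacy classes onto $\bar\Phi$-twisted conjugacy classes (if $\bar x,\bar y$ are $\bar\Phi$-twisted conjugate via $\bar z$, any lift $z\in\pi^{-1}(\bar z)$ sends some preimage of $\bar x$ to some preimage of $\bar y$). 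Hence $R(\Phi)\ge R(\bar\Phi)$, and it suffices to prove the $R_\infty$-property for $G(R)$.

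\emph{Classification of automorphisms.} I would next invoke the description of $\Aut(G(R))$ for adjoint Chevalley groups over commutative rings, going back to Steinberg and Borel--Tits and refined by Abe, Bunina and others: every $\phi\in\Aut(G(R))$ is, modulo an inner automorphism, a product $\sigma\theta^*$, where $\sigma$ is a graph automorphism coming from a symmetry of the Dynkin diagram of $G$ and $\theta^*$ is induced by some ring automorphism $\theta\in\Aut(R)$. Because the Reidemeister number depends only on the outer class of $\phi$, it is enough to show $R(\sigma\theta^*)=\infty$ for every such pair $(\sigma,\theta)$.

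\emph{A torus invariant.} Fix an $R$-split maximal torus $T\subset G$, so that $T(R)\cong(R^\times)^r$ with $r\ge 2$ by the rank hypothesis. After a further inner adjustment (permuting simple roots by $\sigma$), arrange that $\phi=\sigma\theta^*$ preserves $T(R)$, and write $\psi$ for its restriction to the torus. Since $T(R)$ is abelian, the $\psi$-twisted conjugacy classes of $T(R)$ are precisely the cosets of the image of the Lang-type map $L_\psi\colon x\mapsto x\cdot\psi(x)^{-1}$. A standard centralizer argument for regular semisimple elements shows that regular elements of $T(R)$ lying in distinct cosets of $T(R)/\mathrm{im}(L_\psi)$ remain in distinct $\phi$-twisted conjugacy classes of $G(R)$, up to the action of the finite (relative) Weyl group. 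It therefore suffices to exhibit infinitely many such cosets.

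\emph{Main obstacle.} The crux is proving $|T(R)/\mathrm{im}(L_\psi)|=\infty$ for every $(\sigma,\theta)$. Because $F[t]\subseteq R\subsetneq F(t)$, the unit group fits in an exact sequence $1\to F^\times\to R^\times\to \bigoplus_{\mathfrak p\in P}\mathbb Z\to 1$, where $P$ is the nonempty set of primes of $F[t]$ inverted in $R$. Any $\theta\in\Aut(R)$ must preserve $F$ (as the subfield of elements algebraic over $\mathbb F_p$ in $R$), so $\theta$ acts on $F^\times$ via $\theta|_F\in\Aut(F)$ and by a permutation on $P$ in a way compatible with its effect on $t$. The analysis then splits into two regimes: when $\theta|_F$ or the induced action on a $1$-parameter subgroup $\{t^k\}$ has infinite order, infinitely many cosets of $\mathrm{im}(L_\psi)$ are produced by an explicit $1$-parameter family inside $T(R)$; when $F$ is finite and $\theta$ has finite order, $\theta$ reduces to a M\"obius transformation in $t$ of finite order together with a finite-order field automorphism, and the infinitude of $P$ then forces infinitely many $\theta$-orbits on $P$, each injecting into a distinct coset of $L_\psi$. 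The bookkeeping required to combine this torus analysis with the graph automorphism $\sigma$, case-by-case over the classical Dynkin types $A_n$, $B_n$, $C_n$, $D_n$, is where I expect the principal technical difficulty to reside.
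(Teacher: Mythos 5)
Your reduction from $\mathsf G$ to $G(R)$ and your appeal to the Bunina--Steinberg description of $\Aut(G(R))$ both match the paper (its Lemma \ref{characteristic}(1) and Theorem \ref{Bunina}; central automorphisms are disposed of by the Abe--Hurley theorem since the group is adjoint of rank $\ge 2$). But the core of your argument --- producing infinitely many twisted classes from a split maximal torus via the cokernel of the Lang map $L_\psi$ --- has a fatal gap. Since $T(R)\cong (R^\times)^r$, everything hinges on the unit group of $R$, and in the most basic case covered by the theorem, $R=\mathbb F_q[t]$, one has $R^\times=\mathbb F_q^\times$, so $T(R)$ is a \emph{finite} group and $T(R)/\mathrm{im}(L_\psi)$ can only ever yield finitely many classes. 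Relatedly, your set $P$ of inverted primes need not be infinite --- it is empty for $R=F[t]$ and can be any proper subset of the primes --- so the claimed dichotomy ``$\theta|_F$ of infinite order, or infinitely many $\theta$-orbits on $P$'' does not cover the case $F=\mathbb F_q$, $R=\mathbb F_q[t]$ at all. The source of infinitude in this problem is the additive structure of $R$ (polynomials of unbounded degree), not the multiplicative one, so no torus-based invariant can work uniformly. A secondary issue: even where $R^\times$ is infinite, the assertion that regular torus elements in distinct $L_\psi$-cosets stay in distinct $\phi$-twisted classes of $G(R)$ up to the Weyl group is a field-theoretic argument that you would need to justify over the ring $R$.

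The paper's route is quite different. It chooses a non-unit $s\in R$ fixed by every ring automorphism (Lemma \ref{fixed-set}), observes that every ring automorphism acts with finite order on each subgroup $G(R_\ell)$ so that Lemma \ref{fixedgroup} reduces $\phi$-twisted conjugacy of $\phi$-fixed elements to ordinary conjugacy of their $r$-th powers, and then exhibits an infinite family of \emph{unipotent-built} elements --- $e_{12}(s^m)$, $x_m=e_{12}(s^m)e_{21}(-s^m)$, and their symplectic/orthogonal analogues --- whose powers are separated either by the degree of their trace as a polynomial in $s$ (types $A$, $C$) or by an explicit computation showing that conjugacy forces $\lambda'^2=\lambda^2 u$ for a unit $u$ (types $B$, $D$), which fails for $\lambda=s^k$, $\lambda'=s^{k'}$ with $k\ne k'$ since $s$ is not a unit. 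If you want to salvage your outline, you would need to replace the torus by root subgroups and import an invariant of this kind.
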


\begin{remark} Since there are exactly $4$ types of classical Chevalley adjoint groups, $\mathsf{A}$--$\mathsf{D}$, we have the following $4$ cases:

\noindent {\sf (A)} $\mathsf{G}/\mathsf{N} \stackrel{\sim}{\rightarrow} PSL_n(R)$ for $n \geq 3$, this includes the case that $\mathsf{G}$ is $SL_n(R)$.
The $R_\infty$-property of $SL_n(R)$ has already been proved by Mitra and Sankaran in \cite{MP}.

\noindent {\sf (B)} $\mathsf{G}/\mathsf{N} \stackrel{\sim}{\rightarrow} PSO_{2n+1}(R)=SO_{2n+1}(R)$ for $n \geq 2$.

\noindent {\sf (C)} $\mathsf{G}/\mathsf{N} \stackrel{\sim}{\rightarrow} PSp_{2n}(R)$ for $n \geq 2$, this includes the case that $\mathsf{G} = Sp_{2n}(R)$.

\noindent {\sf (D)} $\mathsf{G}/\mathsf{N} \stackrel{\sim}{\rightarrow} PSO_{2n}(R)$ for $n \geq 3$, this includes the case that $\mathsf{G} = SO_{2n}(R)$.
\end{remark}

The proofs of the above four cases are given in \S\ref{A_n}, \S\ref{B_n} \S\ref{C_n},and \S\ref{D_n} respectively. 
The proof of the $R_\infty$-propery of the group $SO_8$ needs special treatment and is given in \S\ref{D_4}.
By Lemma \ref{characteristic} (1), we need to prove the results only for the corresponding adjoint groups. 

Roughly speaking, the proof of \ref{MainThm} involves three major steps which are as follows: \\
(i) to have a nice description of the automorphisms of the groups of our consideration, which is given by E. Bunina \cite{Bu} and is recalled in \S\ref{Aut} of this paper, \\
(ii) find a convenient representative automorphism $\phi$ within its outer automorphism class and use Lemma \ref{outer}, \\
(iii) come up with an infinite sequence $x_k, k \in \mathbb{N}$ of elements in the group which lie in distinct $\phi$-twisted conjugacy classes and thus conclude the $R_\infty$-property of the group.

\begin{remark}
We note that some low rank groups are left out from the Theorem \ref{MainThm}.
The group $PSL_2(R)$ is a difficult group to handle, in the sense that we do not have a complete description of its automorphisms.
The groups $PSO_3(R)$ and $PSp_2(R)$ are isomorphic to $PSL_2(R)$ while the group $PSO_4(R)$ is isomorphic to $(PSL_2 \times PSL_2)(R)$. 
These isomorphisms of Chevalley groups can be read off from the corresponding isomorphisms in Lie algebras which are clear by looking at the corresponding Dynkin diagrams. The $R_\infty$-property of some rank 1 groups has been established in \cite{mitra-sankaran-n2} where it is proved that the groups $SL_2(F[t]), GL_2(F[t]), GL_2(\mathbb{F}_q[t,t^{-1}])$ have the $R_\infty$-property where $F$ is a subfield of $\bar{\mathbb{F}_p}$.
\end{remark}

\section{Basic results on twisted conjugacy}

Let $\phi:G\to G$ be an automorphism of a
group $G$.
We shall denote by $[x]_\phi$ the $\phi$-twisted conjugacy class of $x$ and by $\mathcal R(\phi)$ the set of all $\phi$-twisted conjugacy classes in $G$.
In this section, we present some basic results concerning twisted conjugacy and the $R_\infty$-property which are relevant to our purpose. 

The first step towards simplifying the $R_\infty$-problem is to notice the fact that there is a bijection from $\mathcal R(\iota_g\circ \phi)$ to $\mathcal R(\phi)$ defined as $[x]_{\iota_g\circ\phi}\mapsto [xg]_{\phi}$, where $\iota_g:G\to G$ denotes the inner automorphism $x\mapsto gxg^{-1}$. 
We have the following lemma which is well-known and can be found for example, in \cite[\S3]{g-s}.
\begin{lemma}\label{outer}
The group $G$ has the $R_\infty$-property if $R(\phi)=\infty$ for all $\phi\in T$ where $T \subset Aut(G)$ is a set of representatives of $\Out(G)$, the outer automorphism group of $G$. 
\end{lemma}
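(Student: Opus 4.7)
The plan is to reduce the problem to the observation already recorded in the paragraph immediately preceding the lemma, namely the existence of a bijection $\mathcal R(\iota_g\circ\phi)\to\mathcal R(\phi)$ sending $[x]_{\iota_g\circ\phi}\mapsto [xg]_\phi$. Granting this bijection, $R(\iota_g\circ\phi)=R(\phi)$ for every $g\in G$ and every $\phi\in\Aut(G)$, so Reidemeister numbers are constant on cosets of $\mathrm{Inn}(G)$ in $\Aut(G)$, i.e.\ on elements of $\Out(G)$. The lemma then follows by taking an arbitrary $\psi\in\Aut(G)$, writing it as $\psi=\iota_g\circ\phi$ for some representative $\phi\in T$ of its outer class and some $g\in G$, and using the hypothesis to get $R(\psi)=R(\phi)=\infty$.

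Concretely, I would organise the proof in three short steps. First, fix $\phi\in\Aut(G)$ and $g\in G$, and verify that the assignment $x\mapsto xg$ descends to a well-defined map on twisted conjugacy classes: if $y=h\,x\,(\iota_g\circ\phi)(h^{-1})=hxg\,\phi(h^{-1})g^{-1}$, then $yg=h(xg)\phi(h^{-1})$, so $[yg]_\phi=[xg]_\phi$. Second, show this map $\mathcal R(\iota_g\circ\phi)\to\mathcal R(\phi)$ is a bijection: the inverse sends $[z]_\phi$ to $[zg^{-1}]_{\iota_g\circ\phi}$, and one checks both compositions are the identity by reversing the computation in step one. Third, given any $\psi\in\Aut(G)$, use that $T$ is a set of coset representatives for $\mathrm{Inn}(G)$ in $\Aut(G)$ to find $\phi\in T$ with $\psi\phi^{-1}\in\mathrm{Inn}(G)$, say $\psi\phi^{-1}=\iota_g$, and conclude $R(\psi)=R(\iota_g\circ\phi)=R(\phi)=\infty$ by hypothesis.

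There is no real obstacle here; the content is a routine calculation combined with the definition of $\Out(G)=\Aut(G)/\mathrm{Inn}(G)$. The only point worth being careful about is the direction of the bijection and the fact that the twisting automorphism changes from $\iota_g\circ\phi$ to $\phi$ under right translation by $g$, which is what makes the Reidemeister number depend only on the outer class. Since the lemma is attributed to \cite[\S3]{g-s}, I would simply record these verifications briefly and cite the reference for the standard statement.
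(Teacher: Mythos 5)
Your proposal is correct and is essentially the paper's own argument: the paper records exactly the bijection $[x]_{\iota_g\circ\phi}\mapsto[xg]_\phi$ in the paragraph preceding the lemma and then cites \cite[\S3]{g-s} rather than spelling out the verification, which is precisely what you have done. The computations in your three steps check out, so nothing further is needed.
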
 

The following lemma gives a way of reducing the twisted conjugacy problem to usual conjugacy problem in case of finite order automorphisms. 
The lemma can be proved along the same line as \cite[ Lemma 2.3]{gs-pjm-2016}. 

\begin{lemma}\label{fixedgroup}
Let $\theta:G\to G$ be an automorphism such that for every $z \in G, \theta^{n_z}(z)=z,$ for some natural number $n_z$. Suppose that $x,y \in \fix(\theta)$ and that $[x]_\theta=[y]_\theta$.
Then there exists $r \in \mathbb{N}$ such that $x^r$ and $y^r$ are conjugates in $G$. (This $r$ depends on $x$ and $y$.)

In particular, if $\theta$ is a finite order automorphism of order $r$, then $x,y \in \fix(\theta)$ with $[x]_\theta=[y]_\theta$ implies that $x^r$ and $y^r$ are conjugates in $G$.
\end{lemma}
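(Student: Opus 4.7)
The plan is to produce an explicit $r$ from the witness $g$ of twisted conjugacy and verify the statement by a telescoping computation. Since $[x]_\theta = [y]_\theta$, I start by choosing $g \in G$ with $y = g\, x\, \theta(g^{-1})$. The idea is to iterate the automorphism $\theta$ on this equation and exploit the hypotheses $\theta(x) = x$ and $\theta(y) = y$ to get a whole family of expressions for the same element $y$.

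Concretely, write $h_k := \theta^k(g)$ for $k \geq 0$. Applying $\theta$ to the relation $y = h_0 x h_1^{-1}$ and using $\theta(x) = x$, $\theta(y) = y$ gives $y = h_1 x h_2^{-1}$; a straightforward induction then shows
\[
y \;=\; h_k \, x \, h_{k+1}^{-1} \qquad \text{for every } k \geq 0.
\]
By the standing assumption, there exists $r = n_g \in \mathbb{N}$ with $\theta^r(g) = g$, i.e.\ $h_r = h_0 = g$. Multiplying the $r$ expressions obtained for $y$ at $k = 0, 1, \dots, r-1$ produces a telescoping product in which each intermediate pair $h_k^{-1} h_k$ cancels:
\[
y^r \;=\; (h_0 x h_1^{-1})(h_1 x h_2^{-1}) \cdots (h_{r-1} x h_r^{-1}) \;=\; h_0\, x^r\, h_r^{-1} \;=\; g\, x^r\, g^{-1}.
\]
This is exactly the desired conjugacy $x^r \sim y^r$, with the conjugating element being the same $g$ that realized the twisted conjugacy. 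For the \emph{in particular} part, if $\theta$ has order $r$ then $\theta^r$ is the identity of $G$, so $n_z = r$ works uniformly for every $z$, and the same $r$ serves for all pairs $x,y \in \fix(\theta)$.

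The content of the argument is entirely contained in the telescoping identity above, so there is no serious obstacle; the only point deserving care is that $r$ is allowed to depend on the pair $(x,y)$ in the general statement because the witness $g$ varies, and one must invoke the hypothesis precisely for this $g$. Writing the equation $y = h_k x h_{k+1}^{-1}$ using $\theta(y) = y$ (rather than $\theta^k$ of the original relation) is what cleanly separates the role of $\theta$-invariance of $x$ and $y$ from the finite-orbit condition on $g$.
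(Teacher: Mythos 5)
Your proof is correct, and it is the standard telescoping argument that the paper itself invokes (the paper gives no proof, deferring to \cite[Lemma 2.3]{gs-pjm-2016}, which proceeds exactly this way: iterate $\theta$ on $y = g\,x\,\theta(g)^{-1}$, use $\theta$-invariance of $x$ and $y$, and multiply the $r = n_g$ resulting relations so the intermediate factors cancel). Your observation that $r$ is really $n_g$ for the witness $g$, hence depends on the pair $(x,y)$ only through the choice of $g$, is the right point of care and matches the parenthetical in the statement.
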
 

Let $K\subset G$ be a normal subgroup.
Let $\eta:G\to H$ be the canonical quotient map where $H=G/K$.
Suppose that $\phi:G\to G$ is an automorphism such that $\phi(K)=K$.
We then have the following diagram with exact rows where the isomorphisms $\phi',\bar\phi$ are defined by $\phi$ in a natural way:
$$\begin{array}{ l l l l l l llll }
1 &\to & K&\to & G& \to & H& \to &1\\
& &\downarrow\phi' && \downarrow \phi& &\downarrow \bar \phi &&\\
1&\to &K&\to &G& \to &H&\to& 1\\
\end{array}
$$
Then we have the following lemma from \cite[Lemma 2.2]{ms-cmb} that connects the Reidemeister numbers of $\phi, \phi'$ and $\bar{\phi}$.
\begin{lemma} \label{characteristic}
Suppose that $\phi:G \to G$ is an automorphism of an infinite group such that the rows in the above commutative diagram are exact and $\phi',\bar \phi$ are the induced isomorphisms. 
Then:
\begin{enumerate}
\item $R(\phi) \geq R(\bar\phi)$.

\item Suppose that $K$ is finite. 
Then $R(\bar\phi)=\infty$ if $R(\phi)=\infty$.  

\item Suppose that $H$ is finite. 
Then $R(\phi)=\infty$ if $R(\phi')=\infty$.  

\end{enumerate}
\end{lemma}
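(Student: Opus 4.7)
My approach is to study the three natural maps among the Reidemeister sets $\mathcal{R}(\phi)$, $\mathcal{R}(\bar\phi)$, and $\mathcal{R}(\phi')$ induced by the exact sequence $1 \to K \to G \to H \to 1$, and to bound the sizes of their fibres. All three parts are essentially diagram chases built on the compatibility $\eta\circ\phi=\bar\phi\circ\eta$ together with $\phi|_K=\phi'$.

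For (1), I would push classes forward along $\eta$. If $y = g x \phi(g^{-1})$, then $\eta(y) = \eta(g)\, \eta(x)\, \bar\phi(\eta(g))^{-1}$, so the assignment $\eta_*([x]_\phi) := [\eta(x)]_{\bar\phi}$ is well-defined, and it is surjective because $\eta$ is. Hence $R(\phi) \geq R(\bar\phi)$.

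For (2), I would estimate the fibres of $\eta_*$. Fix $\bar x \in H$ and a lift $x \in G$. If $[y]_\phi \in \eta_*^{-1}([\bar x]_{\bar\phi})$, then $\eta(y) = \bar g\, \bar x\, \bar\phi(\bar g)^{-1}$ for some $\bar g \in H$; choosing a lift $g \in G$, the representative $g^{-1} y \phi(g) \in [y]_\phi$ projects to $\bar x$ and therefore lies in $xK$. Thus every class in the fibre has a representative of the form $xk$, $k\in K$, giving a surjection $K \twoheadrightarrow \eta_*^{-1}([\bar x]_{\bar\phi})$ via $k\mapsto [xk]_\phi$. When $|K|<\infty$ this yields $R(\phi)\leq |K|\cdot R(\bar\phi)$, and the contrapositive is exactly (2).

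For (3), I would introduce $\iota_* : \mathcal{R}(\phi') \to \mathcal{R}(\phi)$ induced by the inclusion $K \hookrightarrow G$; well-definedness is immediate since $\phi$-conjugation by $k\in K$ agrees with $\phi'$-conjugation by $k$. Given $x \in K$, a class $[y]_{\phi'}$ sits in the fibre over $[x]_\phi$ iff $y = g x \phi(g^{-1}) \in K$ for some $g \in G$; projecting to $H$ gives $\bar g\,\bar\phi(\bar g)^{-1}=1$, so $\bar g \in \fix(\bar\phi)$. I would then check that the $\phi'$-class of $gx\phi(g^{-1})$ depends only on $\bar g$: replacing $g$ by $gk$ with $k \in K$ produces $(gkg^{-1})\cdot gx\phi(g^{-1})\cdot \phi(gkg^{-1})^{-1}$, which is a $\phi'$-conjugate since $gkg^{-1}\in K$ by normality. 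This provides a surjection $\fix(\bar\phi) \twoheadrightarrow \iota_*^{-1}([x]_\phi)$, so when $|H|<\infty$ we obtain $R(\phi') \leq |H|\cdot R(\phi)$, and the contrapositive gives (3).

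The most delicate point is the lift-independence argument in (3); everything else is a routine check using normality of $K$ and the intertwining $\eta\circ\phi=\bar\phi\circ\eta$. No serious obstacle is anticipated.
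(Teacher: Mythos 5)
Your proposal is correct; all three fibre estimates ($R(\phi)\ge R(\bar\phi)$ via surjectivity of $\eta_*$, fibres of $\eta_*$ of size at most $|K|$, and fibres of $\iota_*$ of size at most $|\fix(\bar\phi)|\le |H|$, including the lift-independence check using normality of $K$) go through as written. The paper gives no proof of its own here — it imports the statement from \cite[Lemma 2.2]{ms-cmb} — and your diagram-chase is exactly the standard argument underlying that reference, so there is nothing to compare beyond noting the match.
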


\section{Automorphisms of classical Chevalley groups} \label{Aut}

In this section, we describe all group automorphisms of classical Chevalley groups over commutative rings with identity. 

Let $\Phi$ be a root system, in the sense of Humphreys, (\cite[$\S 9.2$]{Hu}), and let $R$ be a commutative ring with identity. 
Let $G(\Phi, R)$ be the Chevalley group of adjoint type associated to $\Phi$ over $R$. 
For the basic theory and constructions of Chevalley groups we refer the reader to the excellent book by Steinberg, \cite{St}.

We assume that the root system $\Phi$ is irreducible of classical type, that is, we assume that $\Phi$ is $A_l$ for $l \geq 1$, $B_l$ for $l \geq 2$, $C_l$ for $l \geq 3$ or $D_l$ for $l \geq 4$. 
The automorphisms of $G(\Phi, R)$ have been discussed in various papers but our reference is \cite{Bu} since it is one of the recent works on this topic and also because it gives a uniform description applicable to all types valid for every $R$. 

The automorphism group, Aut$(G(\Phi, R))$, is generated by four of its subgroups. 
We describe these subgroups below and then state the main result. 

Let $\rho:R \to R$ be a ring automorphism. 
This ring automorphism induces, in a natural way, an automorphism of the Chevalley group $G(\Phi, R)$.
Such automorphisms are called \emph{ring automorphisms} of $G(\Phi, R)$. 
If $G(\Phi, R)$ is seen as a matrix group, for example, if $G(\Phi, R) = PSL_n(R)$, then this automorphism is obtained simply by applying the map $\rho$ to the matrix entries. 
We will abuse the notations and use the symbol $\rho$ to denote the ring automorphism of $G(\Phi, R)$ induced by the automorpshim $\rho$ of $R$. 

For every $x \in G(\Phi, R)$, we have the \emph{inner} automorphism $i_x$ given by $i_x(g) = xgx^{-1}$.

An automorphism $\tau:G(\Phi, R) \to G(\Phi, R)$ is called a \emph{central automorphism} if $g^{-1}\tau(g) \in Z(G(\Phi, R))$ for every $g$.
If $\chi: G(\Phi, R) \to Z(G(\Phi, R))$ denotes the map $g \mapsto g^{-1}\tau(g)$ then it follows that $\chi$ is a group homomorphism. 
We then denote the central automorphism $\tau$ by $\tau_{\chi}$. 

We choose a base in the root system $\Phi$ and consider the corresponding Dynkin diagram. This diagram is a graph (with possibly multiple edges) and a graph automorphism of it induces, in a natural way, an automorphism of $G(\Phi, R)$. These automorphisms are called \emph{graph automorphisms} of $G(\Phi, R)$. Note that $G(\Phi, R)$ admits non-trivial graph automorphisms if and only if $G$ is of type $A_l$, $D_l$ or $B_2$ in characteristic $2$.

We now state the main theorem of this section. 

\begin{theorem}\cite[Theorem 1]{Bu}\label{Bunina}
Let $R$ be a commutative ring and let $\Phi$ be an irreducible root system of classical type of rank $> 1$. 
If $\Phi$ is of the type $A_2, B_l$ or $C_l$ then we assume that $1/2 \in R$. 

Every group automorphism of $G(\Phi, R)$ is a composition of a ring automorphism, an inner automorphism, a central automorphism and a graph automorphism. 
\end{theorem}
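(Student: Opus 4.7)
The plan is to follow the blueprint established by Steinberg in the field case and extended by Abe, Hahn--O'Meara, and Bunina herself to commutative rings: identify the structural invariants of $G(\Phi, R)$ that any automorphism must preserve, and then peel off the four listed types of automorphisms one at a time. Given $\phi \in \Aut(G(\Phi, R))$, the goal is to produce an inner $i_x$, a graph $\sigma$ and a ring $\rho$ automorphism such that $\rho^{-1}\sigma^{-1}i_x^{-1}\phi$ acts trivially on the elementary subgroup $E(\Phi,R)$, and to argue that any such residual automorphism must be central.

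First I would show that the elementary subgroup $E(\Phi,R)$, generated by all root subgroups $X_\alpha = \{x_\alpha(r) : r \in R\}$, is characteristic in $G(\Phi,R)$, and that after composing $\phi$ with a suitable inner automorphism $i_x$ one may assume $\phi$ stabilizes a standard split maximal torus $H$. From the weight decomposition under such a torus one recovers the set $\{X_\alpha\}$, and $\phi$ must permute the root subgroups. The Chevalley commutator relations $[x_\alpha(a), x_\beta(b)] = \prod x_{i\alpha + j\beta}(N_{ij}\, a^i b^j)$ force this permutation to respect the combinatorics of the root system, so it descends to an automorphism of the Dynkin diagram, i.e.\ a graph automorphism $\sigma$. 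The hypothesis $1/2 \in R$ for types $A_2, B_l, C_l$ is precisely what rules out the exceptional isogenies and extra graph symmetries that appear in characteristic $2$ and would otherwise obstruct this identification.

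After composing with $\sigma^{-1}$, each $X_\alpha$ is preserved setwise, yielding an additive automorphism $\phi_\alpha$ of $R \cong X_\alpha$ for each root $\alpha$. Applying $\phi$ to both sides of the commutator relations above then forces multiplicativity of each $\phi_\alpha$ and the equality $\phi_\alpha = \phi_\beta =: \rho$ (up to signs absorbed by the chosen structure constants), so $\rho$ is a ring automorphism of $R$. Composing with $\rho^{-1}$ reduces to the situation where $\phi$ is the identity on $E(\Phi,R)$. For such $\phi$ the map $g \mapsto g^{-1}\phi(g)$ is a homomorphism $G(\Phi, R) \to C_{G(\Phi,R)}(E(\Phi,R))$; for adjoint Chevalley groups of the types in the statement, $E(\Phi,R)$ is large enough that its centralizer in $G(\Phi,R)$ coincides with $Z(G(\Phi,R))$, and $\phi$ is therefore a central automorphism $\tau_\chi$ in the sense defined just above the theorem.

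The main obstacle is the intrinsic recognition of the split torus and the root subgroups over a general commutative ring $R$: in the field case these are identified via eigenspace and semisimple-element arguments that fail over rings with zero divisors or with nontrivial Picard group. The standard remedy, which I would adopt, is to localize $\phi$ at every maximal ideal $\mathfrak{m}$ of $R$, invoke the known field-level classification on each residue quotient $R/\mathfrak{m}$, and patch the resulting data using functoriality of $G(\Phi, -)$ and the (nontrivial) fact that conjugacy of split tori over a local ring can be lifted from the residue field. The small-characteristic hypotheses in the statement are precisely what allow this patching to go through uniformly across all maximal ideals of $R$.
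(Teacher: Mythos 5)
The first thing to say is that the paper does not prove this statement at all: it is quoted verbatim as \cite[Theorem 1]{Bu} and used as a black box throughout (via Corollary \ref{chev-outer}). There is therefore no ``paper's own proof'' to compare yours against; the relevant benchmark is Bunina's article itself, and you were not expected to reprove it here.

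Judged as a proof, your text is a roadmap in which every genuinely hard step is announced and then deferred, so it has real gaps. (i) The characteristicity of $E(\Phi,R)$ and, above all, the claim that after an inner twist $\phi$ stabilizes a standard split maximal torus are exactly the points where the field-case argument (semisimple elements, eigenspace decompositions) fails over a general commutative ring; you concede this in your last paragraph, but the ``localize at every maximal ideal and patch'' remedy is where essentially the whole of Bunina's paper is spent. One must first show that $\phi$ is compatible with localization, then invoke the local-ring classification --- which Bunina obtains for adjoint groups not by torus-stabilization but by analysing the images of the involutions $h_\alpha(-1)$ and their centralizers, which is where the hypothesis $1/2\in R$ actually enters --- and finally glue the locally defined conjugating elements into a single global one, which requires controlling the normalizer of $E(\Phi,R)$ inside a larger linear group and is not a formal descent argument. (ii) Your explanation of the hypothesis $1/2\in R$ is wrong for type $A_2$: there is no exceptional isogeny or extra graph symmetry for $A_2$ in characteristic $2$ (those phenomena occur for $B_2=C_2$, $F_4$, and $G_2$); for $A_2$ the hypothesis is an artifact of the involution method. (iii) The final reduction, that an automorphism trivial on $E(\Phi,R)$ is central, rests on the identification $C_{G(\Phi,R)}(E(\Phi,R))=Z(G(\Phi,R))$, which you assert but which is itself a theorem of Abe--Hurley type. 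In short: the outline is consistent in spirit with the Steinberg--Bunina strategy, but as written it does not constitute a proof, and the paper itself deliberately avoids supplying one.
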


The commuting relations satisfied by the automorphisms of the above four types are listed, without proof, in the following lemma. 

\begin{lemma}
Let $G = G(\Phi, R)$ denote the Chevalley group of adjoint type associated to the root system $\Phi$ over the ring $R$. 

Let $i_x$ denote the inner automorphism of $G$ corresponding to $x \in G$ and let $\tau_{\chi}$ denote a central automorphism of $G$.
Then for every automorphism $\sigma$ of $G$
$$\sigma \circ i_x = i_{\sigma(x)} \circ \sigma \hskip5mm {\rm and} \hskip5mm \tau_{\chi} \circ \sigma = \sigma \circ \tau_{\eta}$$
where $\eta = \sigma^{-1}|_{Z(G)} \circ \chi \circ \sigma: G \to Z(G)$.

Further, every graph automorphism $\epsilon$ of $G$ commutes with every ring automorphism $\rho$ of $G$. 
\end{lemma}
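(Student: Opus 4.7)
The plan is to verify each of the three commutation relations by direct computation, essentially unwinding the definitions of the four automorphism types introduced in this section.

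For the inner-automorphism identity $\sigma \circ i_x = i_{\sigma(x)} \circ \sigma$, I would just apply both sides to an arbitrary $g \in G$. Using that $\sigma$ is a group homomorphism, $\sigma(xgx^{-1}) = \sigma(x)\sigma(g)\sigma(x)^{-1} = i_{\sigma(x)}(\sigma(g))$. This is the standard conjugation-transport formula and requires nothing beyond $\sigma$ being a homomorphism.

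For the central-automorphism relation, I would first unwind $\tau_\chi$ via $\tau_\chi(g) = g \cdot \chi(g)$, which is immediate from the definition $\chi(g) = g^{-1}\tau_\chi(g)$. Since $\chi$ takes values in $Z(G)$ and the center is characteristic, $\sigma$ restricts to an automorphism of $Z(G)$, so $\eta := \sigma^{-1}|_{Z(G)} \circ \chi \circ \sigma$ is a well-defined group homomorphism $G \to Z(G)$; one then checks that $\eta$ is indeed the cocycle of a central automorphism $\tau_\eta$ with $\tau_\eta(g) = g \cdot \eta(g)$. Applying the proposed identity to $g$ and using the homomorphism property of $\sigma$ reduces it to $\chi(\sigma(g)) = \sigma(\eta(g))$, which is precisely the defining relation for $\eta$.

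For the commutation of a graph automorphism $\epsilon$ with a ring automorphism $\rho$, I would work on the Chevalley root-subgroup generators $x_\alpha(r)$ of $G(\Phi, R)$. The ring automorphism acts by $x_\alpha(r) \mapsto x_\alpha(\rho(r))$, affecting only the ring entry, while the graph automorphism acts by $x_\alpha(r) \mapsto x_{\epsilon(\alpha)}(\pm r)$, affecting only the root label. Because the two actions touch disjoint pieces of data — ring element versus root label — they commute on every generator, and hence everywhere by multiplicativity. I expect the only mild obstacle to be conventions: the sign (or structure-constant factor) in the graph-automorphism formula depends on the chosen normalization, but since that scalar depends only on $\alpha$ and not on $r$, it is fixed by $\rho$, and the commutation still holds.
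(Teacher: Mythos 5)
The paper states this lemma explicitly ``without proof,'' so there is no in-text argument to compare against; your proposal supplies the missing verification, and it is correct. The first identity is the standard conjugation-transport computation; the second reduces, exactly as you say, to $\chi(\sigma(g))=\sigma(\eta(g))$, which holds by the definition of $\eta$ (and $\tau_\eta=\sigma^{-1}\circ\tau_\chi\circ\sigma$ is automatically an automorphism, which settles the well-definedness you flag). One small correction in the third part: the fact that the sign $\varepsilon_\alpha$ in $\epsilon(x_\alpha(r))=x_{\epsilon(\alpha)}(\varepsilon_\alpha r)$ is independent of $r$ is not by itself a reason for $\rho$ to fix it --- a constant in $R$ need not be $\rho$-invariant. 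The correct reason is that $\varepsilon_\alpha\in\{\pm 1\}$ lies in the prime subring of $R$, which every ring automorphism fixes; with that substitution the generator computation $\rho(x_{\epsilon(\alpha)}(\varepsilon_\alpha r))=x_{\epsilon(\alpha)}(\varepsilon_\alpha\rho(r))=\epsilon(x_\alpha(\rho(r)))$ goes through, and multiplicativity extends it to all of $G(\Phi,R)$ since the adjoint Chevalley group here is generated by the root subgroups. (Alternatively, in the matrix realizations used later in the paper, $\epsilon$ is conjugation by a fixed matrix $B$ with entries in the prime field, and $\rho(B g B^{-1})=B\rho(g)B^{-1}$ is immediate.)
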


\begin{corollary}\label{chev-outer}
We assume that the notations are as in the previous lemma. 
Then, every element of $\Aut(G)/{\rm Inn}(G)$ is represented by an automorphism $\sigma \in \Aut(G)$ of the type
$$\sigma = \tau_{\chi} \circ \rho \circ \epsilon .$$
\end{corollary}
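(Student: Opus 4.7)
The plan is to start from Bunina's Theorem \ref{Bunina}, which expresses an arbitrary $\sigma\in\Aut(G)$ as a composition of the four types of generators (inner, ring, central, graph) in \emph{some} order, and then use the commuting relations of the preceding lemma to rearrange that composition, modulo inner automorphisms, into the prescribed order $\tau_\chi\circ\rho\circ\epsilon$.

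I would carry out the rearrangement in three moves. First, I repeatedly apply the identity $\sigma'\circ i_x = i_{\sigma'(x)}\circ\sigma'$ to every inner factor appearing in the Bunina decomposition of $\sigma$, so as to push all inner factors to the outermost (leftmost) position. After this step $\sigma = i_y\circ\sigma''$ with $\sigma''$ a word in ring, central, and graph automorphisms only; since we are working in $\Aut(G)/\mathrm{Inn}(G)$, the factor $i_y$ can be dropped and we may assume $\sigma=\sigma''$.

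Second, I would read the relation $\tau_\chi\circ\sigma' = \sigma'\circ\tau_\eta$ in the reverse direction to push each central factor in $\sigma''$ leftward past every ring and graph factor, at the cost of replacing its defining homomorphism $G\to Z(G)$ by another such homomorphism (given explicitly by $\chi = \sigma'|_{Z(G)}\circ\eta\circ(\sigma')^{-1}$). Concentrating every central factor at the left turns $\sigma''$ into a word of the form $\tau_{\chi_1}\circ\cdots\circ\tau_{\chi_k}\circ w$, where $w$ is a word in ring and graph automorphisms only. A short check shows that the composition of two central automorphisms is again central (the map $g\mapsto g^{-1}(\tau_{\chi_1}\tau_{\chi_2})(g)$ takes values in $Z(G)$ and is a homomorphism, because $Z(G)$ is abelian), so the $\tau_{\chi_i}$ merge into a single central automorphism $\tau_\chi$.

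Third, the remaining word $w$ involves only ring and graph automorphisms, which commute with each other by the last assertion of the preceding lemma. Hence $w$ can be reordered as a block of ring automorphisms followed by a block of graph automorphisms, and since compositions of ring (respectively graph) automorphisms are again ring (respectively graph) automorphisms, this collapses to $w = \rho\circ\epsilon$. Combining the three steps produces the required representative $\sigma\equiv\tau_\chi\circ\rho\circ\epsilon\pmod{\mathrm{Inn}(G)}$. The only mildly technical point is the stability of each class (central, ring, graph) under composition, which needs a brief verification but no input beyond what has already been set up.
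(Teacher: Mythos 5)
Your proposal is correct and is essentially the argument the paper intends: the corollary is stated without proof as an immediate consequence of the commuting relations in the preceding lemma, and your rearrangement (push inner factors left and discard them modulo ${\rm Inn}(G)$, push central factors left via $\sigma'\circ\tau_\eta=\tau_\chi\circ\sigma'$, then use the commutation of ring and graph automorphisms), together with the easy closure of each class under composition, is exactly the intended reasoning.
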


\subsection{Ring automorphisms}\label{ring-auto}
We shall assume that $R$ is a ring such that $F[t] \subset R \subsetneq F(t)$, where $F$ is a subfield of $\overline{\mathbb F}_p$. 
An automorphism $\rho:R \to R$ restricts to an automorphism of $F$. 
Note that for any $x\in F$, $\rho(x)=x^{p^r} $ for some $r\ge 1$ where the value of $r$ may depend on $x$. 
Also $\rho(t)=\frac{at+b}{ct+d}$ where $ad-bc \neq 0$ and $a,b,c,d\in \mathbb F_q\subset F$ with $q=p^e$ for some $e\ge 1$.

Note that when $R$ is a ring with $\mathbb{F}_q[t] \subset R \subsetneq \mathbb{F}_q(t)$, then $\Aut(R)$ is a finite group because it is a subgroup of $\Aut(\mathbb{F}_q(t)) = PGL(2,\mathbb{F}_q)\rtimes \Aut(\mathbb{F}_q)$.
If $R_k$ denotes the intersection $\mathbb{F}_k(t) \cap R$, then $\rho$ stabilizes $R_q$ (where $\rho(t)=\frac{at+b}{ct+d}$ where $ad-bc \neq 0$ and $a,b,c,d\in \mathbb F_q\subset F$) and also $R_l$ whenever $R_q \subset R_l$ and $\rho$ is of finite order on each such $R_l$. Consequently, the groups $G(R_l)$ are stable under $\rho$, where $G$ is a classical Chevalley group of adjoint type and $G(R)$ is union of the groups $G(R_l)$. Hence, $\rho$ satisfies the hypothesis of Lemma \ref{fixedgroup}.

Now choose $q$ such that there is a non-zero $f \in \mathbb{F}_q[t]$, which is not invertible in $R$ (this is possible because $R \subsetneq F(t)$) and $\rho$ restricts to an automorphism of $R_q$.
Set $s:=\prod _{\sigma\in \Aut (R_q)} \sigma( f) .$
This is a finite product as $\Aut (R_q)$ is finite. 
Then $s \in R_q$ but is not invertible. 
As, otherwise, $f(t)$, which divides $s$ in $R$, would be invertible in $R$. 
In particular, $s$ is not a scalar. 
Thus we have the following lemma:

\begin{lemma}\label{fixed-set}
Using notations from the above paragraph, define $S$ to be $\mathbb F_p[s]\subset R$.
Then $G(\Phi, S)$ is element-wise fixed under the automorphism $\rho$ of the Chevalley group $G(\Phi, R)$.
\end{lemma}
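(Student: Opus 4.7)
The plan is to establish two elementary facts about the element $s$ defined in the statement and then deduce the lemma from them. The first fact is that $\rho(s) = s$. Indeed, by the discussion preceding the lemma, the restriction of $\rho$ to $R_q$ is an element of the finite group $\Aut(R_q)$; left-multiplication by this element permutes $\Aut(R_q)$, and since $\rho$ is a ring homomorphism one computes
$$\rho(s) = \rho\!\left(\prod_{\sigma \in \Aut(R_q)} \sigma(f)\right) = \prod_{\sigma \in \Aut(R_q)} (\rho\sigma)(f) = \prod_{\tau \in \Aut(R_q)} \tau(f) = s,$$
where the third equality is just a relabeling of the product.

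The second fact is that $\rho$ fixes $\mathbb{F}_p$ pointwise. By the description of ring automorphisms recalled in \S\ref{ring-auto}, the restriction $\rho|_F$ sends each $x \in F$ to $x^{p^{r(x)}}$ for some $r(x) \geq 1$. Every $x \in \mathbb{F}_p$ satisfies $x^p = x$, hence $x^{p^r} = x$ for every $r \geq 1$, giving $\rho(x) = x$.

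Combining the two facts, any $\mathbb{F}_p$-polynomial expression in $s$ is fixed by $\rho$; that is, $\rho$ restricts to the identity on $S = \mathbb{F}_p[s]$. To finish, recall that the automorphism of $G(\Phi, R)$ induced by $\rho$ sends each Steinberg generator $x_\alpha(u)$ (for $\alpha \in \Phi$, $u \in R$) to $x_\alpha(\rho(u))$. Since $G(\Phi, S)$ is generated by those $x_\alpha(u)$ with $u \in S$, each such generator is fixed under $\rho$, and therefore the whole subgroup $G(\Phi, S)$ is fixed element-wise.

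No serious obstacle arises here: the finite-orbit product defining $s$ was engineered precisely so that $s$ would be $\rho$-invariant, and the only remaining ingredient is the trivial observation that the Frobenius-type action of $\rho$ on $F$ is the identity on the prime subfield $\mathbb{F}_p$.
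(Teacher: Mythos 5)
Your proof is correct and follows exactly the argument the paper intends (the paper leaves the lemma as an immediate consequence of the preceding paragraph): $\rho(s)=s$ because $\rho|_{R_q}\in\Aut(R_q)$ permutes the factors of the defining product, $\rho$ fixes the prime field $\mathbb{F}_p$, hence $S=\mathbb{F}_p[s]$ is fixed pointwise and the induced automorphism fixes every generator $x_\alpha(u)$, $u\in S$, of $G(\Phi,S)$. Your write-up simply makes explicit the details the paper omits.
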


\subsection{Central automorphisms}

The following lemma shows that there is no non-trivial central automorphisms of the groups of our concern in this paper.

\begin{lemma}[Abe-Hurley, \cite{AH}]\label{Abe-Hurley:1988}
If $G$ is a Chevalley group of adjoint type of rank at least 2, then for any commutative ring $R$ the group $G(R)$ has trivial center .
\end{lemma}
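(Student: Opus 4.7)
The plan is to show that any $z \in Z(G(R))$ equals the identity by combining the commutation rules among the root subgroups with the description of the maximal torus of the adjoint group as $T(R) = \mathrm{Hom}_{\mathbb{Z}}(Q, R^\times)$, where $Q$ is the root lattice spanned by $\Phi$.

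First I would argue that a central element necessarily lies in $T(R)$. The group $G(R)$ is generated by the root subgroups $U_\alpha = \{ x_\alpha(t) : t \in R \}$ for $\alpha \in \Phi$ together with $T(R)$, subject to the Steinberg relations. Given $z \in Z(G(R))$, the rank $\geq 2$ hypothesis guarantees that for every root $\alpha$ there exists a root $\beta$ such that $\{\alpha,\beta\}$ spans a rank-two sub-root-system; the Chevalley commutator formula $[x_\alpha(s), x_\beta(t)] = \prod_{i,j>0} x_{i\alpha+j\beta}(c_{ij}\, s^i t^j)$ then lets one push any non-toral component of $z$ into a contradiction with centrality. More concretely, centralizing every $x_\alpha(t)$ forces $z$ into the common normalizer of the positive and negative unipotent subgroups $U^+$ and $U^-$, which is precisely $T(R)$.

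Once $z \in T(R)$, conjugation acts on each root subgroup by the character $\alpha$: $z\, x_\alpha(t)\, z^{-1} = x_\alpha(\alpha(z)\, t)$. The hypothesis $z \in Z(G(R))$ therefore forces $\alpha(z) = 1 \in R^\times$ for every $\alpha \in \Phi$. Because $G$ is of adjoint type, the character lattice of $T$ is exactly the root lattice $Q$, which is generated by $\Phi$. Hence every character of $T$ vanishes on $z$, so $z \in T(R) = \mathrm{Hom}_{\mathbb{Z}}(Q, R^\times)$ is the trivial homomorphism and $z = 1$.

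The main obstacle lies in the first step: since $R$ is not assumed to be a field, there is no literal Bruhat decomposition, so one cannot simply intersect $B^+$ and $B^-$ as in the classical argument. Instead, one must work directly with the Steinberg presentation and carry out careful bookkeeping inside the commutator formula, which is the technical content of \cite{AH}. The rank $\geq 2$ assumption is essential because it permits the pairing of each simple root $\alpha$ with a partner root $\beta$; the rank-one case is genuinely different and is explicitly excluded from the statement.
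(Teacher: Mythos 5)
The paper does not actually prove this lemma: it is imported verbatim from Abe--Hurley \cite{AH}, so there is no internal argument to compare against. Judged on its own terms, your sketch gets the second step exactly right and for the right reason: once $z\in T(R)=\mathrm{Hom}_{\mathbb Z}(Q,R^\times)$, the relation $z\,x_\alpha(t)\,z^{-1}=x_\alpha(\alpha(z)t)$ forces $\alpha(z)=1$ for all $\alpha\in\Phi$, and since the adjoint condition means the character lattice is $Q$, which is generated by $\Phi$, the homomorphism $z$ is trivial. This is precisely the point where ``adjoint type'' enters, and it is why the simply connected forms have non-trivial center while the adjoint ones do not.

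The genuine gap is your first step. The claim that a central element of $G(\Phi,R)$ must lie in $T(R)$ is not established by what you write: ``push any non-toral component of $z$ into a contradiction'' presupposes that $z$ has a decomposition into toral and unipotent components, which is exactly what fails over a general commutative ring (no Bruhat or Gauss decomposition), and the assertion that the common normalizer of $U^+$ and $U^-$ equals $T(R)$ is itself a non-trivial theorem in this generality --- indeed it is essentially equivalent to the statement you are trying to prove, since a central element normalizes everything. You acknowledge that this is ``the technical content of \cite{AH}'', which is honest, but it means your argument reduces the lemma to the very reference the paper cites rather than proving it. As a self-contained proof the proposal is incomplete at its crucial step; as an explanation of why the cited result holds and where the rank $\geq 2$ and adjoint hypotheses are used, it is a reasonable and correctly oriented gloss.
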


\begin{corollary}\label{adjoint:central}
If $G$ is a Chevalley group of adjoint type of rank at least 2, then for any commutative ring $R$ there are no non-trivial central automorphisms of $G(R)$. 
\end{corollary}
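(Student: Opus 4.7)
The plan is to deduce this directly from Lemma \ref{Abe-Hurley:1988}. Recall from the discussion preceding Theorem \ref{Bunina} that a central automorphism $\tau$ of $G(R)$ is an automorphism for which the map $\chi : G(R) \to Z(G(R))$ defined by $\chi(g) = g^{-1}\tau(g)$ takes values in the center, and in fact is a group homomorphism.

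My proposed argument is as follows. By Lemma \ref{Abe-Hurley:1988}, the center $Z(G(R))$ is trivial for any Chevalley group of adjoint type of rank at least 2 over any commutative ring $R$. Consequently, the homomorphism $\chi$ defining the central automorphism $\tau$ must be the constant map sending every element of $G(R)$ to the identity. Unwinding this, $g^{-1}\tau(g) = e$ for every $g \in G(R)$, so $\tau(g) = g$ for all $g$, i.e.\ $\tau$ is the identity automorphism.

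There is essentially no obstacle here — once the Abe–Hurley result is invoked, the conclusion is a one-line unwinding of the definition of a central automorphism. The only thing worth noting is that this proof uses nothing about the ring $R$ beyond commutativity, which matches the statement of the corollary. In particular, it applies uniformly to all rings considered in this paper, so subsequent sections may assume, using Corollary \ref{chev-outer}, that every outer automorphism class is represented by a composition $\rho \circ \epsilon$ of a ring automorphism and a graph automorphism, without needing a central factor $\tau_\chi$.
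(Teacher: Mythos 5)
Your argument is correct and is exactly the intended deduction: the paper states the corollary immediately after the Abe--Hurley result (Lemma \ref{Abe-Hurley:1988}) precisely because triviality of $Z(G(R))$ forces $\chi \equiv e$ and hence $\tau = \mathrm{id}$. Nothing is missing.
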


\section{Groups of type $A_n$, $n \geq 2$} \label{A_n}

Throughout this section $R$ denotes a ring such that $F[t] \subset R \subsetneq F(t)$, where $F$ is a subfield of $\bar{\mathbb{F}_p}$, unless stated otherwise.
This section recalls the work done in \cite{MP}, proving that $SL_n(R)$ has the $R_{\infty}$-property for all $n \geq 3$.

The adjoint group isogenous to $SL_n$ is $PSL_n$. 
We prove that $PSL_n(R)$ has the $R_{\infty}$-property and then by Lemma \ref{characteristic}, conclude that the group $SL_n(R)$ also has the $R_{\infty}$-property as every automorphism of $SL_n$ induces an automorphism of $PSL_n$. 
We know from Theorem \ref{Bunina} that a group automorphism of $PSL_n(R)$, $n > 2$, is a product of a ring automorphism, a central automorphism, a graph automorphism and an inner automorphism.

By Corollary \ref{adjoint:central}, there are no non-trivial central automorphisms of $PSL_n(R)$.
Further, there is only one non-trivial graph automorphism, $\epsilon$, of $PSL_n(R)$. 
This automorphism, up to an inner automorphism, is given by $\epsilon(g) = g^{-t}$, the transpose inverse of $g$. 
We use the symbol $\rho$ to denote a ring automorphism of $PSL_n(R)$.
To summarise:

\begin{lemma} \label{out}
Each group automorphism of the group $PSL_n(R)$ is of the type $i_x \circ \rho$ or $i_x \circ \rho \circ \epsilon$. 
\end{lemma}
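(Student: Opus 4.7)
The plan is to read off the lemma from three previously established results, together with the specific structure of $PSL_n$. I view $PSL_n(R)$ as the adjoint Chevalley group $G(A_{n-1}, R)$ of rank $n-1 \geq 2$, noting that the hypothesis $1/2 \in R$ required by Theorem \ref{Bunina} in type $A_2$ is automatic here because $p \neq 2$ and $\mathbb{F}_p \subset R$. Applying Corollary \ref{chev-outer}, which combines Theorem \ref{Bunina} with the commuting relations of the preceding lemma, writes any automorphism in the normal form $\sigma = i_x \circ \tau_\chi \circ \rho \circ \epsilon$ for some inner factor $i_x$, central automorphism $\tau_\chi$, ring automorphism $\rho$, and graph automorphism $\epsilon$.

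Next I would eliminate the central factor: since $PSL_n(R)$ is adjoint of rank $\geq 2$, Corollary \ref{adjoint:central} gives that $\tau_\chi$ is the identity, reducing the factorisation to $\sigma = i_x \circ \rho \circ \epsilon$. It then remains to analyse $\epsilon$. The Dynkin diagram of $A_{n-1}$ is a simple path on $n-1$ nodes whose automorphism group has order $2$ (generated by the reflection $\alpha_i \mapsto \alpha_{n-i}$) for $n \geq 3$, which is precisely our range. Thus $\epsilon$ is either trivial, giving the first case $\sigma = i_x \circ \rho$, or it is the unique non-trivial graph automorphism.

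For the non-trivial case, a direct computation on the Chevalley generators $x_\alpha(r)$ shows that the corresponding map on $PSL_n(R)$, in the standard matrix realisation, coincides with $g \mapsto g^{-t}$ up to conjugation by the permutation matrix $w_0$ realising the longest element of the Weyl group. This conjugation is inner and can be absorbed into $i_x$, using the commutation $\rho \circ i_y = i_{\rho(y)} \circ \rho$ from the preceding lemma to push it past $\rho$. This yields the second case $\sigma = i_x \circ \rho \circ \epsilon$ with $\epsilon(g) = g^{-t}$, as in the statement.

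The step requiring the most care is the last one: the explicit identification of the abstract Dynkin-diagram flip with the transpose-inverse map must account for the choice of matrix realisation and for the resulting inner correction that must be shuffled through the ring automorphism. Beyond this bookkeeping there is no genuine obstacle, and the two cases together give exactly the dichotomy claimed in the lemma.
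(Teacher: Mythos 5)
Your proposal is correct and follows essentially the same route as the paper: invoke Theorem \ref{Bunina} (via Corollary \ref{chev-outer}), kill the central factor with Corollary \ref{adjoint:central}, and observe that the only non-trivial graph automorphism of $A_{n-1}$ is, up to an inner automorphism, $g \mapsto g^{-t}$. The extra care you take with the $1/2 \in R$ hypothesis and with absorbing the conjugation by $w_0$ into the inner factor is sound but matches what the paper leaves implicit.
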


We note that if $x,y \in PSL_n(R)$ are conjugates, then $tr(x)=\lambda tr(y)$ for some $\lambda \in F^\times$.
This fact, together with the Lemma stated below from \cite{MP} (the proof is included for the sake of completion) proves that the group $PSL_n(R)$ has infinitely many conjugacy classes. It is also crucial in proving the $R_\infty$-property of $PSL_n(R)$.

\begin{lemma}\label{SLn-inf-conju}\cite[Lemma 3.8]{MP} 
Fix an element $s$ of $R$ as in Lemma \ref{fixed-set}. 
Consider the matrix $x_m = \big(\begin{smallmatrix} 1 - s^{2m} & s^m \\ -s^m & 1 \end{smallmatrix}\big)$ identified with $\big(\begin{smallmatrix} x_m & \\ & I_{n-2}\end{smallmatrix}\big) \in SL_n(R)$. 
We identify $x_m$ with its image in $PSL_n(R)$.
For a fixed $r \geq 1$, $deg(tr(x_m^r)) \neq  deg(tr(x_n^r))$, whenever $m \neq n$. 
\end{lemma}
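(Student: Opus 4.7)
The plan is to turn the statement into an explicit polynomial-degree calculation in $t$. The key observation is that the $2\times 2$ block of $x_m$ has determinant $1$ and trace $2 - s^{2m}$, so over the algebraic closure of $R$ its eigenvalues are $\lambda, \lambda^{-1}$ with $\lambda + \lambda^{-1} = 2 - s^{2m}$. The trace of $x_m^r$ is then
\[
\mathrm{tr}(x_m^r) = (n-2) + T_r, \qquad T_r := \lambda^r + \lambda^{-r},
\]
the constant $n-2$ coming from the identity block that was appended.

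I would compute $T_r$ via the standard power-sum recurrence
\[
T_0 = 2, \qquad T_1 = 2 - s^{2m}, \qquad T_{r+1} = (2-s^{2m})\, T_r - T_{r-1}.
\]
Setting $u := s^{2m}$, an easy induction on $r$ shows that $T_r$ is a polynomial in $u$ of degree exactly $r$ with leading coefficient $(-1)^r$. The only subtlety is to check that this leading coefficient survives reduction mod $p$, but since $p \neq 2$ we have $(-1)^r \neq 0$ in $F$, so the degree is preserved.

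Because $s \in \mathbb{F}_q[t]$ is not a scalar (as emphasised just before Lemma~\ref{fixed-set}, so $\deg_t(s) \geq 1$), pulling back to $t$-degrees yields
\[
\deg_t\bigl(\mathrm{tr}(x_m^r)\bigr) = 2mr \cdot \deg_t(s),
\]
which is strictly increasing in $m$ for any fixed $r \geq 1$, so distinct $m$ give distinct degrees. The passage from $SL_n(R)$ to $PSL_n(R)$ is harmless: two lifts of the image of $x_m$ differ by an $n$-th root of unity $\zeta \in F^\times$, so their $r$-th powers have traces differing only by the scalar $\zeta^r \in F^\times$, which does not affect the $t$-degree. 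There is no serious obstacle here; once the Newton recurrence is written down the whole argument reduces to bookkeeping, which is no doubt why the authors of \cite{MP} state the lemma in this clean form.
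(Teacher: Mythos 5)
Your proof is correct and follows essentially the same route as the paper's: both derive the three-term recurrence for $\mathrm{tr}(x_m^r)$ from the characteristic polynomial (equivalently, the power-sum recurrence for $\lambda^r+\lambda^{-r}$) and show by induction that the trace is a polynomial of degree $2rm$ in $s$ with leading coefficient $(-1)^r$. The only cosmetic differences are that the paper measures degree directly in $s$ rather than pulling back to $t$ (note $s$ is built from images of $f$ under $\Aut(R_q)$ and need not lie in $\mathbb{F}_q[t]$, so it is safer to work with degree in the polynomial ring $\mathbb{F}_p[s]$ as the paper does), and that your appeal to $p\neq 2$ is unnecessary since $(-1)^r$ is nonzero in every field.
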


\begin{proof}
Set $x:=\bigl(\begin{smallmatrix} 1-u^2 & u \\ -u& 1\end{smallmatrix} \bigr)\in \SL_2(\mathbb F_p[u])$.  
Then $\trace(x)=2-u^2$ and $\trace(x^2)=2-4u^2+u^4$.
As the characteristic polynomial of $x$ is $X^2-(2-u^2)X+1$, we obtain the relation $\trace(x^r)=(2-u^2)\trace(x^{r-1})-\trace(x^{r-2})$ for any $r\ge 3$.
It follows by induction that $\trace(x^r)$ is a polynomial in $u$ of degree $2r$ with leading coefficient $(-1)^r\in \mathbb F_p$.

The last assertion still holds when $x$ is viewed as an element of $\SL_n(\mathbb F_p[u]), n\ge 3$. 
Applying this to the elements $x_m\in \SL_n(R)$ defined above, $\trace(x_m^r)\in S=\mathbb F_p[s]\subset R$ is a polynomial in $s$ of degree $2rm$.
Hence 
$deg(tr(x_m^r)) \neq  deg(tr(x_n^r))$, whenever $m \neq n$. 
\end{proof}
By Lemma \ref{outer}, the $R_{\infty}$-property for $PSL_n(R)$ will follow if $R(\sigma) = \infty$ for every $\sigma \in T$ where $T \subseteq \Aut(PSL_n(R))$ is a transversal for the quotient $\Aut(PSL_n(R)) \to \Out(PSL_n(R))$. 
Using the lemma \ref{out}, we take $T$ to be the set of automorphisms of $PSL_n(R)$ of the type $\rho$ or $\rho \circ \epsilon$. 

The following theorem is a special case of Theorem 1.1 of \cite{MP}.
We state it below and give the proof for completeness from ibid. $\S$ 3.5.

\begin{theorem}\label{An-finite}
Let $R$ be a ring with $\mathbb{F}_q[t] \subset R \subsetneq \mathbb{F}_q(t)$ and let $G$ be the group $PSL_n(R)$ for $n > 2$. 
Then $G$ has $R_{\infty}$-property. 
\end{theorem}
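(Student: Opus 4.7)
By Lemma \ref{outer} combined with Lemma \ref{out}, it suffices to prove $R(\sigma)=\infty$ for $\sigma$ of the form $\rho$ or $\rho\circ\epsilon$, where $\rho$ is a ring automorphism of $R$ and $\epsilon(g)=g^{-t}$. Since $\Aut(R)$ embeds in the finite group $PGL_2(\mathbb{F}_q)\rtimes\Aut(\mathbb{F}_q)$, as remarked in \S\ref{ring-auto}, $\rho$ has finite order; and because $\epsilon^2=1$ and $\epsilon$ commutes with $\rho$, the automorphism $\sigma$ is of finite order in both cases. My candidate sequence is $\{x_m\}_{m\ge 1}$ from Lemma \ref{SLn-inf-conju}: these lie in $PSL_n(S)$ with $S=\mathbb{F}_p[s]$, and Lemma \ref{fixed-set} gives $\rho(x_m)=x_m$ for every $m$.

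When $\sigma=\rho$, the $x_m$ lie in $\fix(\sigma)$, so Lemma \ref{fixedgroup} applies directly: if $[x_m]_\sigma=[x_n]_\sigma$ then $x_m^r$ and $x_n^r$ are conjugate in $PSL_n(R)$ for some $r$. Lifting to $SL_n(R)$, conjugate $PSL_n$-classes have representatives whose traces differ only by an $n$-th root of unity in $F^\times$, so $\trace(x_m^r)$ and $\trace(x_n^r)$ have the same degree in $s$, contradicting Lemma \ref{SLn-inf-conju}.

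When $\sigma=\rho\circ\epsilon$, the element $x_m$ is typically not fixed: $\sigma(x_m)=\rho(x_m^{-t})=x_m^{-t}$. The main device, and the central technical point, is to pass to $y_m:=x_m\sigma(x_m)=x_m x_m^{-t}$ and to work with the automorphism $\sigma^2=\rho^2$ (using $\epsilon^2=1$ and that $\rho,\epsilon$ commute). The assignment $[x]_\sigma\mapsto[x\sigma(x)]_{\sigma^2}$ is well-defined, since $y=gx\sigma(g)^{-1}$ implies $y\sigma(y)=g(x\sigma(x))\sigma^2(g)^{-1}$. The entries of $y_m$ lie in $S$, so $y_m\in\fix(\rho^2)=\fix(\sigma^2)$. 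A direct $2\times 2$ computation yields $\trace(y_m)=2-4s^{2m}$ and $\det(y_m)=1$, whence the same two-term recursion as in the proof of Lemma \ref{SLn-inf-conju} gives $\trace(y_m^r)$ as a polynomial in $s$ of degree $2rm$. Applying Lemma \ref{fixedgroup} to $\sigma^2$ and the pair $y_m,y_n\in\fix(\sigma^2)$: if $[y_m]_{\sigma^2}=[y_n]_{\sigma^2}$ then some powers $y_m^r,y_n^r$ are conjugate in $PSL_n(R)$, forcing equality of the $s$-degrees of their traces---impossible for $m\neq n$. Hence the classes $[y_m]_{\sigma^2}$, and a fortiori the classes $[x_m]_\sigma$, are pairwise distinct, giving $R(\sigma)=\infty$.
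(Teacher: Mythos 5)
Your proposal is correct and follows essentially the same strategy as the paper: handle $\rho$ via Lemma \ref{fixedgroup} applied to the $\rho$-fixed elements $x_m$ and the trace-degree separation of Lemma \ref{SLn-inf-conju}, and handle $\rho\circ\epsilon$ via the norm map $x\mapsto x\sigma(x)$ reducing to $\sigma^2$-twisted (here ordinary, up to $\rho^2$) conjugacy. The only difference is the choice of witnesses in the second case: the paper takes the elements $e_{12}(s^m)$ and observes that $e_{12}(s^m)\epsilon(e_{12}(s^m))=x_m$, so Lemma \ref{SLn-inf-conju} can be reused verbatim, whereas you take the $x_m$ themselves and must compute afresh that $y_m=x_mx_m^{-t}$ has trace $2-4s^{2m}$ and hence $\deg\trace(y_m^r)=2rm$. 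Your computation is right, but note that the leading coefficient of $\trace(y_m^r)$ is $(-4)^r$, so your variant silently needs $p\neq 2$; that is a standing hypothesis of the paper's main theorem but is not assumed in Theorem \ref{An-finite} itself, and the paper's choice of witnesses (leading coefficient $(-1)^r$) avoids this restriction.
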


\begin{proof}
We deal with the case of the automorphism $\rho$ first. 
Every such $\rho$ is of finite order as $\Aut(R)$ is finite as mentioned in \ref{ring-auto}. 
We take $r$ to be the order of $\rho$. 
Since $x_m$, in Lemma  \ref{SLn-inf-conju}, are fixed by $\rho$, Lemma \ref{fixedgroup} and Lemma \ref{SLn-inf-conju} give that $R(\rho) = \infty$. 

Now we come to the case of $\rho \circ \epsilon$.
We first discuss the subcase of $\rho$ being trivial, that is the case of the automorphism $\epsilon$. 
We identify the matrix $e_{12}(a) = \big(\begin{smallmatrix} 1 & a \\ & 1\end{smallmatrix}\big)$ with the element $\big(\begin{smallmatrix} e_{12}(a) & \\ & I_{n-2}\end{smallmatrix}\big) \in SL_n(R)$ and with its image in $PSL_n(R)$.
Lemma \ref{fixedgroup} along with the fact that $x_m=e_{12}(s^m)\epsilon(e_{12}(s^m))$ implies that the identified elements $e_{12}(s^m)$ and $e_{12}(s^k)$ in $PSL_n(R)$, $m \ne k$, are in different $\epsilon$-twisted conjugacy classes. 

A similar argument is applicable in the general case of an automorphism $\rho \circ \epsilon$. 
We again see that the elements $e_{12}(s^m)$ and $e_{12}(s^k)$, $m \ne k$, of $PSL_n(R)$ are in different $\rho \circ \epsilon$-twisted conjugacy classes.
\end{proof}

Now, we state and give the proof of ibid. Theorem 1.1 in the general case where $F$ is a subfield of $\overline{\mathbb{F}}_q$. 

\begin{theorem}
Let $F$ be a subfield of $\overline{\mathbb{F}}_p$ and take $R$ to be a ring such that $F[t] \subset R \subsetneq F(t)$. 
The group $PSL_n(R)$, for $n \geq 3$, has the $R_{\infty}$-property. 
\end{theorem}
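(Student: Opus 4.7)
The plan is to mirror the proof of Theorem \ref{An-finite} verbatim. By Lemma \ref{out} and Lemma \ref{outer} it suffices to show that $R(\sigma)=\infty$ for $\sigma$ of the form $\rho$ or $\rho\circ\epsilon$, where $\rho$ is a ring automorphism of $R$ and $\epsilon$ is the graph involution $g\mapsto g^{-t}$. The only ingredient of the finite-field case that can fail when $F$ is infinite is the finiteness of $\Aut(R)$, so $\rho$ may no longer have finite order globally.

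To salvage this, I would invoke the analysis in \S\ref{ring-auto}: $\rho(t)=(at+b)/(ct+d)$ has coefficients in some $\mathbb{F}_q$ and on $F$ the map $\rho$ is a Frobenius power, so $\rho$ stabilizes each $R_l=\mathbb{F}_l(t)\cap R$ with $R_q\subset R_l$ and $\rho|_{R_l}$ has finite order. Since $G(R)=\bigcup_l G(R_l)$, every $z\in G(R)$ satisfies $\rho^{n_z}(z)=z$ for some $n_z$. Because $\rho$ and $\epsilon$ commute and $\epsilon^2=1$, $(\rho\circ\epsilon)^{2n}=\rho^{2n}$, so the same hypothesis holds for $\rho\circ\epsilon$. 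Hence Lemma \ref{fixedgroup} is available for both $\sigma=\rho$ and $\sigma=\rho\circ\epsilon$.

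Next, I would enlarge $q$ if necessary so that there is a non-invertible $f\in\mathbb{F}_q[t]\cap R$ and apply Lemma \ref{fixed-set} to produce $s\in R_q$ with $\mathbb{F}_p[s]\subset\fix(\rho)$. Then the matrices $x_m\in PSL_n(\mathbb{F}_p[s])$ of Lemma \ref{SLn-inf-conju} and the elementary matrices $e_{12}(s^m)$ are $\rho$-fixed, and $x_m=e_{12}(s^m)\,\epsilon(e_{12}(s^m))$. For $\sigma=\rho$: if $[x_m]_\rho=[x_k]_\rho$ with $m\ne k$, Lemma \ref{fixedgroup} yields some $r\geq 1$ with $x_m^r$ and $x_k^r$ conjugate in $PSL_n(R)$; their traces must then agree up to an $F^\times$-scalar, contradicting $\deg_s\trace(x_m^r)=2rm\ne 2rk=\deg_s\trace(x_k^r)$ from Lemma \ref{SLn-inf-conju}. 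For $\sigma=\rho\circ\epsilon$: a relation $g\,e_{12}(s^m)\,\sigma(g)^{-1}=e_{12}(s^k)$ combined with its $\sigma$-transform, using $e_{12}(s^j)\in\fix(\rho)$ and $\sigma^2=\rho^2$, collapses to $g\,x_m\,\rho^2(g)^{-1}=x_k$, i.e., $[x_m]_{\rho^2}=[x_k]_{\rho^2}$, and Lemma \ref{fixedgroup} reduces this to the same trace-degree contradiction.

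The main obstacle is the bookkeeping of the $R_l$-filtration needed to recover the finite-order hypothesis of Lemma \ref{fixedgroup} when $\rho$ is of infinite order on $R$; once this is in hand, the explicit witnesses $x_m$ and $e_{12}(s^m)$ and the trace-degree argument carry over unchanged from the $F=\mathbb{F}_q$ case.
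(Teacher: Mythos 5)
Your proposal is correct and follows essentially the same route as the paper: reduce to $\sigma=\rho$ or $\rho\circ\epsilon$ via Lemma \ref{out}, use the filtration $R_l=\mathbb{F}_l(t)\cap R$ to recover the finite-order hypothesis of Lemma \ref{fixedgroup} locally, and then apply the trace-degree argument of Lemma \ref{SLn-inf-conju} to the witnesses $x_m$ (resp.\ $e_{12}(s^m)$ with $x_m=e_{12}(s^m)\epsilon(e_{12}(s^m))$). Your treatment of the $\rho\circ\epsilon$ case is in fact spelled out more explicitly than the paper's, which simply defers to the argument of Theorem \ref{An-finite}.
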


\begin{proof}
Without loss of any generality, we take $F$ to be infinite. 
Once again, it is enough to consider the cases of the automorphisms $\rho$ and $\rho \circ \epsilon$. 

Let $\rho \in \Aut(R)$ be defined over $\mathbb{F}_q$ for some $q$. If $\mathbb{F}_q \subset \mathbb{F}_l$, then $\rho$ restricts to an automorphism of $R_l=R \cap \mathbb{F}_l(t)$ as discussed in section \ref{ring-auto}.

Let $x_m=e_{12}(s^m)e_{21}(-s^m)\in G_q=PSL_n(R_q), m\ge 1,$.
Then $x_m\in \fix(\rho)$.  
Suppose that there exists an element $z\in G = PSL_n(R)$ such that $x_k=z x_m\rho(z^{-1})$ with $k\ne m$.
There exists $\ell=q^d=p^{de}$ a sufficiently large power of $q$ such that $\mathbb F_\ell\subset F$ and  $x_k,x_m, z\in G_\ell$.
Then $\rho^N|_{G_\ell}=id$ where $N:=de$.  
This implies, by Lemma \ref{fixedgroup} that $x^N_m$ and $x^N_k$ are conjugates in $G_\ell$. 
This contradicts Lemma \ref{SLn-inf-conju} and we conclude that $R(\rho)=\infty$.
The proof for $\rho\circ \epsilon$ is similar to the proof of the corresponding type give above, Theorem \ref{An-finite}.
\end{proof}

\section{Groups of type $C_n$, $n \geq 2$} \label{C_n}

The adjoint group of type $C_n$ is $PSp_{2n}$. 
In this section, we will prove that $PSp_{2n}(R)$ has the $R_\infty$-property, thereby conclude that so has $Sp_{2n}(R)$ by Lemma \ref{characteristic}.

By Corollary \ref{adjoint:central}, there are no non-trivial central automorphisms of $PSp_{2n}(R)$.
Further, $PSp_{2n}$ has no non-trivial graph automorphism. So by Theorem \ref{Bunina}, using notations from \S 3, we have the following lemma.

\begin{lemma}
Each group automorphism of the group $PSp_{2n}(R)$ is of the form $i_x \circ \rho$. 
\end{lemma}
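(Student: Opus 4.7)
The plan is to apply Theorem \ref{Bunina} directly and then eliminate the central and graph automorphism factors. By Corollary \ref{chev-outer}, every automorphism $\sigma$ of $PSp_{2n}(R)$ can be written in the form $\sigma = i_x \circ \tau_\chi \circ \rho \circ \epsilon$ for some inner automorphism $i_x$, central automorphism $\tau_\chi$, ring automorphism $\rho$ and graph automorphism $\epsilon$. I want to conclude that $\tau_\chi$ and $\epsilon$ can be taken to be trivial.

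First, I would verify that Bunina's theorem is applicable. Theorem \ref{Bunina} requires that $1/2 \in R$ when $\Phi$ is of type $C_l$. Since the standing hypothesis of the paper is that $p \neq 2$, the element $2 \in \mathbb{F}_p \subset F \subset R$ is invertible, so this condition is satisfied. Next, the elimination of $\tau_\chi$ is immediate from Corollary \ref{adjoint:central}: since $PSp_{2n}$ is of adjoint type and rank $n \geq 2$, the group $PSp_{2n}(R)$ has trivial center, and therefore every central automorphism is trivial. Thus $\tau_\chi = \mathrm{id}$.

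To eliminate $\epsilon$, I would invoke the fact stated in \S\ref{Aut} that non-trivial graph automorphisms of $G(\Phi, R)$ exist only when $\Phi$ is of type $A_l$, $D_l$, or $B_2$ in characteristic $2$. The Dynkin diagram of $C_n$ ($n \geq 2$) has a double edge with the arrow pointing in one direction, so it admits no non-trivial diagram symmetry; equivalently, the short and long roots of $C_n$ are distinguished by their lengths and cannot be exchanged by a graph automorphism. Consequently $\epsilon$ is the identity automorphism of $PSp_{2n}(R)$.

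Putting these observations together, $\sigma = i_x \circ \rho$ as claimed. I do not anticipate any genuine obstacle here: the statement is essentially a bookkeeping corollary of Bunina's theorem specialized to type $C_n$, and the only small point to check is the hypothesis $1/2 \in R$, which is guaranteed by the assumption $p \neq 2$.
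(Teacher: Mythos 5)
Your proposal is correct and follows exactly the paper's route: invoke Bunina's theorem (Corollary \ref{chev-outer}), kill the central automorphism via Corollary \ref{adjoint:central}, and note that the Dynkin diagram of $C_n$ admits no non-trivial symmetry. The only addition is your explicit check that $1/2 \in R$, which the paper leaves implicit but which is indeed guaranteed by $p \neq 2$.
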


Set $x_m=e_{12}(s^m)e_{21}(-s^m)\in \SL_2(S)$, where $S$ is an in Lemma \ref{fixed-set} so that $x_m= \bigl(\begin{smallmatrix} 1-s^{2m} & s^m \\ -s^m & 1\end{smallmatrix} \bigr)$. 
Then $x_m^{-1}=e_{21}(s^m)e_{12}(-s^m)= \bigl(\begin{smallmatrix} 1 & -s^m \\ s^m & 1-s^{2m}\end{smallmatrix} \bigr)$. 
Both $x_m$ and $x_m^{-1}$ satisfy the polynomial $X^2+(s^{2m}-2)X+I_2=0$.   
We regard $x_m$ and $x_m^{-1}$ also as elements of $\SL_n(S)$ by identifying them with the block diagonal matrices $\delta(x_m,I_{n-2})$ and $\delta(x_m^{-1},I_{n-2})$ respectively. 
Note that $y_m = \bigl(\begin{smallmatrix} x_m &  \\  & x_m^{-1}\end{smallmatrix} \bigr) \in Sp_{2n}$.
The following lemma, which is similar to Lemma \ref{SLn-inf-conju} will play a crucial role in our proof.

\begin{lemma}\label{trace}
Let $F$ be a subfield of the algebraic closure of a finite field $\mathbb{F}_p$, $p \ne 2$, and $R$ be a ring such that $F[t] \subset R \subsetneq F(t)$. 
Fix $r\ge 1$. 
Let $y_m=\bigl(\begin{smallmatrix} x_m &  \\  & x_m^{-1}\end{smallmatrix} \bigr) \in Sp_{2n}, m > 1,$ where $x_m=e_{12}(s^m)e_{21}(-s^m)$ and $s$ is as in Lemma $\ref{fixed-set}$. 
Then $deg(tr(y_m^r)) \neq  deg(tr(y_l^r))$, whenever $l \neq n$. 
\end{lemma}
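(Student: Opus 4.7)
The plan is to reduce this lemma directly to (the proof of) Lemma \ref{SLn-inf-conju} by exploiting the block-diagonal structure of $y_m$. Since $y_m^r$ is the block diagonal matrix with blocks $x_m^r$ and $x_m^{-r}$, one obtains
$$\trace(y_m^r) = \trace(x_m^r) + \trace(x_m^{-r}).$$
The next task is to show that the two summands on the right are equal as polynomials in $s$.

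I would use the observation recorded immediately before the lemma: the $2\times 2$ matrices $x_m$ and $x_m^{-1}$ satisfy the same quadratic $X^2 + (s^{2m}-2)X + 1 = 0$, so their sequences of power-traces satisfy the same linear recurrence
$$\trace(x^{j+1}) = (2-s^{2m})\trace(x^j) - \trace(x^{j-1})$$
with identical initial values $\trace(x^0) = 2$ and $\trace(x^1) = 2 - s^{2m}$. A straightforward induction then gives $\trace(x_m^r) = \trace(x_m^{-r})$, and passing through the embedding of $x_m^{\pm 1}$ into $\SL_n$ (which contributes the same additive constant $n-2$ to both traces) yields $\trace(y_m^r) = 2\trace(x_m^r) + 2(n-2)$.

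Finally, I would invoke the computation already carried out inside the proof of Lemma \ref{SLn-inf-conju}: $\trace(x_m^r)$ is a polynomial in $s$ of degree $2rm$ with leading coefficient $(-1)^r$. Since $p \neq 2$, the leading coefficient $2(-1)^r$ of $\trace(y_m^r)$ is nonzero, and hence $\deg(\trace(y_m^r)) = 2rm$. As this degree depends genuinely on $m$, we obtain $\deg(\trace(y_m^r)) \neq \deg(\trace(y_l^r))$ whenever $l \neq m$. There is really no serious obstacle here: the lemma follows from Lemma \ref{SLn-inf-conju} together with the elementary fact that $x_m$ and $x_m^{-1}$, having equal characteristic polynomials, have equal power-traces; the hypothesis $p \neq 2$ enters purely to keep the factor of $2$ from killing the leading coefficient.
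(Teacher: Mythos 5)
Your proposal is correct and follows essentially the same route as the paper: decompose $y_m^r$ into the blocks $x_m^r$ and $x_m^{-r}$, note that $\trace(x_m^r)=\trace(x_m^{-r})$ so that $\trace(y_m^r)=2\trace(x_m^r)$ up to an additive constant, and then invoke the degree computation of Lemma \ref{SLn-inf-conju} together with $p\neq 2$. You merely supply slightly more detail than the paper (justifying $\trace(x_m^r)=\trace(x_m^{-r})$ via the common characteristic polynomial and tracking the constant $2(n-2)$ from the embedding), and you implicitly correct the statement's typo $l\neq n$ to $l\neq m$.
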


\begin{proof}
First we note that $y_m^r=\bigl(\begin{smallmatrix} x_m^r &  \\  & x_m^{-r}\end{smallmatrix} \bigr)$ and $\trace(x_m^r)=\trace(x_m^{-r})$. 
So $\trace(y_m^r)=2\trace(x_m^r).$ Since $p \neq 2$, it follows from Lemma \ref{SLn-inf-conju} that  $deg(tr(y_m^r)) \neq  deg(tr(y_l^r))$, whenever $l \neq n$.  
\end{proof}

Let $\sigma \in \Aut(PSp_{2n}(R))$, where $\sigma$ is of the form $\sigma = \rho$ as mentioned in corollary \ref{chev-outer}. 
Note that in order to show that $PSp_{2n}(R)$ has the $R_\infty$-property, by Lemma \ref{outer}, it is enough to show that $R(\sigma)=\infty.$
Suppose $x,y \in \fix(\rho)$ such that $x,y$ are $\sigma$-conjugates.
So $x=zy \sigma(z^{-1})$, for some $z \in PSp_{2n}(R)$.
Since $\rho$ satisfies the hypothesis of Lemma \ref{fixedgroup}, we conclude that there exists a natural number $r$ such that $x^r$ is conjugate to $y^r$ in $PSp_{2n}(R)$. Since the center of $Sp_{2n}(R)$ is $\{\pm I_n\}$, $x^r$ is conjugate to $y^r$ in $PSp_{2n}(R)$ implies $tr(x^r)=\pm tr(y^r)$, i.e. $deg(tr(x^r))=deg(tr(y^r)).$

Thus we conclude that $\{y_m^r : m>1 \}$ is an infinite set of elements of $PSp_{2n}(R)$ that are in pairwise disjoint conjugacy classes in $PSp_{2n}(R)$.
Hence we have the following theorem.

\begin{theorem}
Let $F$ be a subfield of the algebraic closure of a finite field $\mathbb{F}_p$, $p \ne 2$, $R$ be a ring such that $F[t] \subset R \subsetneq F(t)$ and $PSp_{2n}(R), n \geq 2$.
Then $PSp_{2n}(R)$ and so $Sp_{2n}(R)$ have the $R_\infty$-property.
\end{theorem}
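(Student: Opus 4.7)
The plan is essentially dictated by the preceding lemmas and discussion, so I would simply assemble those ingredients. First, by Lemma \ref{characteristic}(1), any automorphism of $Sp_{2n}(R)$ descends to an automorphism of the adjoint group $PSp_{2n}(R) = Sp_{2n}(R)/\{\pm I\}$, and since the kernel is finite, it suffices by part (2) of the same lemma to prove the $R_\infty$-property for $PSp_{2n}(R)$. By the structural lemma above, every automorphism of $PSp_{2n}(R)$ is of the form $i_x \circ \rho$ for a ring automorphism $\rho$, so by Lemma \ref{outer} I only need to show $R(\rho) = \infty$ for each such representative $\rho$.

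Next, I would take the candidate family $\{y_m\}_{m>1}$ with $y_m = \bigl(\begin{smallmatrix} x_m & \\ & x_m^{-1}\end{smallmatrix}\bigr) \in Sp_{2n}(S)$, where $S = \mathbb{F}_p[s] \subset R$ as in Lemma \ref{fixed-set}. Each $y_m$ lies in $\mathrm{fix}(\rho)$ because $\rho$ fixes $S$ elementwise (Lemma \ref{fixed-set}). As noted in \S\ref{ring-auto}, $\rho$ also has finite order on each $G(R_l)$, so the hypothesis of Lemma \ref{fixedgroup} is satisfied on the whole group. I claim the classes $[y_m]_\rho$ are pairwise distinct. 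Assuming $[y_m]_\rho = [y_l]_\rho$ for $m \neq l$, Lemma \ref{fixedgroup} produces an $r \in \mathbb{N}$ such that $y_m^r$ and $y_l^r$ are conjugate in $PSp_{2n}(R)$.

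Finally, I would lift this conjugacy to $Sp_{2n}(R)$: since the center there is $\{\pm I\}$, a conjugacy in $PSp_{2n}(R)$ forces $\mathrm{tr}(y_m^r) = \pm \mathrm{tr}(y_l^r)$ in $R$, and in particular equality of degrees in $s$. But Lemma \ref{trace} asserts $\deg(\mathrm{tr}(y_m^r)) \neq \deg(\mathrm{tr}(y_l^r))$ whenever $m \neq l$, a contradiction. Hence $\{[y_m]_\rho : m > 1\}$ is infinite, $R(\rho) = \infty$, and $PSp_{2n}(R)$ has the $R_\infty$-property; pulling back via Lemma \ref{characteristic}(2) gives the conclusion for $Sp_{2n}(R)$.

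The only delicate point is the transition between $Sp_{2n}(R)$ and its adjoint quotient, and this is precisely where $p \neq 2$ enters: on the one hand, it ensures $|Z(Sp_{2n}(R))| = 2$ so that conjugacy in the quotient still controls traces up to sign; on the other hand, it is used in Lemma \ref{trace} to guarantee $\mathrm{tr}(y_m^r) = 2\,\mathrm{tr}(x_m^r) \neq 0$ and of the expected degree $2rm$. Everything else is a routine assembly of the preceding lemmas.
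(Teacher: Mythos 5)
Your proposal is correct and follows essentially the same route as the paper: reduce to $\rho$ via Lemma \ref{outer} and the structural lemma, apply Lemma \ref{fixedgroup} to the $\rho$-fixed family $y_m$, and derive a contradiction from Lemma \ref{trace} using that conjugacy in $PSp_{2n}(R)$ determines the trace up to sign. Your closing remark on where $p \neq 2$ enters also matches the paper's use of it.
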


\section{Groups of type $B_n$, $n \geq 2$} \label{B_n}
The adjoint group of type $B_n$ over $R$ is $G=PSO_{2n+1}(R)$, which is equal to the group $SO_{2n+1}(R)=\{g \in SL_{2n+1}(R) \mid {}^{t}gAg=A \}$, where 
$$A= \left[ 
\begin{array} {ccc}
  0_n & I_n & 0 \\  
  I_n & 0_n & \vdots \\
  0 & \dots & 1
\end{array} 
\right].$$
By Corollary \ref{adjoint:central}, $G = PSO_{2n+1}(R)$ admits no non-trivial central automorphisms.
It also does not have any non-trivial graph automorphism. 
So by Theorem \ref{Bunina}, using notations from $\S$ 3, we have the following lemma.

\begin{lemma}
Each group automorphism of the group $G=SO_{2n+1}(R)=PSO_{2n+1}(R)$ is of the form $i_x \circ \rho$. 
\end{lemma}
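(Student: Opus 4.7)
The plan is to derive the statement as an immediate consequence of Bunina's classification (Theorem \ref{Bunina}), in exactly the same spirit as the analogous lemma for $PSp_{2n}$ in \S\ref{C_n}. The task reduces to checking that, in the canonical decomposition $\sigma = i_x \circ \tau_\chi \circ \rho \circ \epsilon$ supplied by Corollary \ref{chev-outer}, both the central component $\tau_\chi$ and the graph component $\epsilon$ are trivial for the adjoint group of type $B_n$.

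First I would verify the hypotheses of Theorem \ref{Bunina}: the root system $\Phi = B_n$ with $n \geq 2$ is irreducible classical of rank $>1$, and the side condition $1/2 \in R$ is satisfied because $p \neq 2$ forces $2 \in \mathbb{F}_p^\times \subset R^\times$. Hence every $\sigma \in \Aut(G(R))$ admits the four-factor decomposition above. Corollary \ref{adjoint:central} then kills the central piece, since $G$ is of adjoint type of rank at least $2$, so $\tau_\chi$ is the identity. For the graph piece, I would invoke the observation in \S\ref{Aut} that non-trivial graph automorphisms exist only for types $A_l$, $D_l$, or $B_2$ in characteristic $2$; none of these applies here (the exceptional $B_2$ case is excluded by $p \neq 2$), so $\epsilon$ is also trivial. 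Combining the surviving factors gives $\sigma = i_x \circ \rho$, as claimed.

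A small collateral verification is the identification $PSO_{2n+1}(R) = SO_{2n+1}(R)$ asserted in the statement. I would justify this by noting that in odd dimension $2n+1$ the only scalars in $O_{2n+1}(R)$ are $\pm I$, and $\det(-I_{2n+1}) = -1 \notin SO_{2n+1}(R)$; consequently the centre of $SO_{2n+1}(R)$ is trivial, which is also consistent with Lemma \ref{Abe-Hurley:1988}. I do not anticipate any genuine obstacle: the argument is essentially bookkeeping on top of the results already developed in \S\ref{Aut}, with the only substantive ingredient being the characteristic hypothesis $p \neq 2$ needed both to apply Bunina's theorem to $B_n$ and to suppress the anomalous graph automorphism of $B_2$.
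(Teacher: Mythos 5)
Your proposal is correct and follows the same route as the paper: invoke Theorem \ref{Bunina} (noting $1/2 \in R$ since $p \neq 2$), eliminate the central factor via Corollary \ref{adjoint:central}, and observe that type $B_n$ with $p \neq 2$ admits no non-trivial graph automorphism. The extra check that $PSO_{2n+1}(R) = SO_{2n+1}(R)$ via the determinant of $-I_{2n+1}$ is a harmless addition consistent with Lemma \ref{Abe-Hurley:1988}.
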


Let $\sigma \in \Aut(G)$, where $\sigma$ is of the form $\sigma = \rho$ as mentioned in corollary \ref{chev-outer}. Suppose $x,y \in \fix(\rho) \subset G$ such that $x,y$ are $\sigma$-conjugates. Since $\rho$ satisfies the hypothesis of Lemma \ref{fixedgroup}, we conclude that there exists a natural number $r$ such that $x^r$ is conjugate to $y^r$ in $G$.
For $\lambda (\neq 0)\in R$ let 
$$x_\lambda=\left[ 
\begin{array} {ccc}
  I_n & y_\lambda & 0 \\  
  0_n & I_n & \vdots \\
  0 & \dots & 1
\end{array} 
\right] {\rm ~where~} y_\lambda = \left[ 
\begin{array} {ccccc}
  0 & -\lambda & 0 & \dots & 0 \\  
  \lambda & 0 & 0 & \dots & 0 \\
  0 & 0 & 0 & \dots & 0 \\
  \vdots & \vdots & \vdots & \vdots & \vdots \\
  0 & 0 & 0 & \dots & 0
\end{array}
\right].$$

Note that $x_\lambda \in G$.
A direct calculation shows that $x_\lambda^r =
x_{r \lambda}$, for $n \geq 2$.
Now let $x_{r \lambda}, x_{r \lambda'}$ be conjugates in $G$.
Then there exists $g \in G$ such that $x_{r \lambda}g=gx_{r \lambda'}$.
We let
$$g= \left[ 
\begin{array} {ccc}
  K & L & a_1 \\  
  M & N & \vdots \\
  b_1 & \dots & a_{2n+1}
\end{array} 
\right]$$ 
Then we have $y_{r \lambda}M=0_n$ and $My_{r \lambda'}=0_n$, solving which, we see that each entry of the first two rows and the first two columns of $M$ must be $0$.

We further have $a_1 = a_1-\lambda a_{n+2}$, $a_2 = a_2+\lambda a_{n+1}$, $b_{n+1} = -\lambda b_2+b_{n+1}$ and $b_{n+2} = \lambda b_1+b_{n+2}$. 
Solving these equations, we get that $a_{n+2} = a_{n+1} = b_2 = b_1 = 0$.

We now consider the equation: $y_{r \lambda}N=Ky_{r \lambda'}$ with $N=(n_{ij})$ and $K=(k_{ij})$. 
By solving it, we see that $n_{1r}=0$ for all $r >2$, $n_{2r}=0$ for all $r>2$, $k_{r1}=0$ for all $r>2$ and $k_{r2}=0$ for all $r>2$. 
We also get
\begin{eqnarray}
-\lambda n_{21}& = &\lambda' k_{12},\\
\lambda n_{22} & = & \lambda' k_{11},\\
\lambda n_{11} & = & \lambda' k_{22}, \\
\lambda n_{12} & =  & -\lambda' k_{21} .
\end{eqnarray}

If we let $K_1$ and $N_1$ denote the top-left $2 \times 2$ parts of $K$ and $N$, respectively then $det(K_1)det(N_1)$ divides $det(g) = 1$.
Thus, both $\det(K_1)$ and $\det(N_1)$, are units in $R$ and so is their product, $\alpha = det(K_1)det(N_1)$. 
Then 
$$\alpha = (k_{11}k_{22}-k_{21}k_{12})(n_{11}n_{22}-n_{21}n_{12})$$
and hence 
$$\lambda'^2 \alpha = (\lambda' k_{11} \lambda' k_{22}-\lambda' k_{21}\lambda' k_{12})(n_{11}n_{22}-n_{21}n_{12}) .$$
By (6.1)--(6.4), we get 
$$\lambda'^2 \alpha = (\lambda n_{22} \lambda n_{11}- \lambda n_{12} \lambda n_{21})(n_{11}n_{22}-n_{21}n_{12}) = \lambda^2det(N_1)^2 .$$
Therefore, $\lambda'^2=\lambda^2det(N_1)^2 \alpha^{-1}$ or $\lambda'^2=\lambda^2 u$, for a unit $u$ in $R$.
Thus, if $x_\lambda^r,x_{\lambda'}^r$ are conjugates in $G$ then $\lambda'^2=\lambda^2 u$, for a unit $u$ in $R$.

Now take $s \in R$, where $s$ is as in Lemma \ref{fixed-set}. Note that $s$ is not a unit in $R$ and $s \notin F$ as discussed in \S \ref{ring-auto}.
Thus it follows from the above calculation that $\{x_{s^k} \mid k \in \mathbb{N}\} \subset Fix(\rho)$ is an infinite subset of $G$ such that $x_{s^k}^r$ is not conjugate to $x_{s^{k'}}^r$ for $k \neq k'$. 
We then have the following theorem.

\begin{theorem}
Let $F$ be a subfield of the algebraic closure of a finite field $\mathbb{F}_p$, $p \ne 2$, $R$ be a ring such that $F[t] \subset R \subsetneq F(t)$ and $G=PSO_{2n+1}(R)=SO_{2n+1}(R), n \geq 2$. Then $G$ has $R_\infty$-property.
\end{theorem}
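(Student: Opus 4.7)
The preceding lemma shows every automorphism of $G = PSO_{2n+1}(R) = SO_{2n+1}(R)$ is of the form $i_x \circ \rho$ for a ring automorphism $\rho$, so by Lemma \ref{outer} it is enough to prove $R(\rho) = \infty$ for each such $\rho$. From \S\ref{ring-auto}, $\rho$ is locally of finite order in the sense of Lemma \ref{fixedgroup}, so if $x, y \in \fix(\rho)$ lie in a common $\rho$-twisted conjugacy class then $x^r$ and $y^r$ are $G$-conjugate for some $r$ depending on $x, y$. My plan is therefore to produce an infinite family inside $\fix(\rho)$ no two of whose integer powers are $G$-conjugate.

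The family I would try consists of highly sparse unipotent elements: let $x_\lambda$ denote the $(2n+1) \times (2n+1)$ upper unitriangular matrix obtained by placing an antisymmetric $2 \times 2$ block with entries $\pm\lambda$ into the top-left corner of the upper-right $n \times n$ block of the identity, all other off-diagonal entries being zero. The orthogonality relation ${}^t x_\lambda A x_\lambda = A$ is immediate from the shape of $A$, and since the nilpotent part of $x_\lambda$ has square zero one gets $x_\lambda^r = x_{r\lambda}$. Setting $\lambda = s^k$ with $s$ as in Lemma \ref{fixed-set} puts every $x_{s^k}$ inside $\fix(\rho)$, giving an infinite candidate family.

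The real work is to show: if $x_{r\lambda}$ and $x_{r\lambda'}$ are conjugate in $G$ by some $g$, then $\lambda'^2 = u \lambda^2$ for a unit $u \in R$. I would write $g$ in block form with $n \times n$ blocks $K, L, M, N$ and an outer row/column, and read the equation $g x_{r\lambda} = x_{r\lambda'} g$ block by block. The sparsity of the nilpotent block should force $M$ to have its first two rows and columns zero, with an analogous constraint propagating to the outer row and column of $g$. The $2 \times 2$ top-left corners $K_1, N_1$ of $K, N$ will then satisfy a linear system expressing the entries of $K_1$ as those of $N_1$ scaled by $\lambda'/\lambda$. Finally, $\det(g) = 1$ together with the acquired zero-block structure should force $\det(K_1)\det(N_1)$ to be a unit, and substituting the linear relations into this unit yields an identity of the form $\lambda'^2 = \lambda^2 \det(N_1)^2 \cdot \alpha^{-1}$ with $\alpha \in R^\times$, which is what we want.

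With this in hand, substituting $\lambda = s^k, \lambda' = s^{k'}$ gives $s^{2k'} = u \cdot s^{2k}$ with $u \in R^\times$, which is impossible for $k \ne k'$ because $s$ is a non-unit in $R$ (compare $s$-adic valuations in $\mathbb{F}_p[s]$). Hence the $x_{s^k}$ lie in pairwise distinct $\rho$-twisted conjugacy classes and $R(\rho) = \infty$. The main obstacle I anticipate is the coordinated use of the conjugation equation with the $\det = 1$ constraint in the third paragraph: the conjugation alone only yields a linear system between $K_1$ and $N_1$, and it is the multiplicative determinant identity that upgrades this to the clean unit-square relation between $\lambda$ and $\lambda'$. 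The need to go through the $r$-th power (rather than $x_\lambda$ itself) is unproblematic since $r \lambda = r s^k$ has the same divisibility behavior as $s^k$ when $r$ is prime to $p$, and $s^{2k}$ carries the valuation information in any case.
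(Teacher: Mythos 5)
Your proposal follows essentially the same route as the paper: the same unipotent family $x_\lambda$ with the antisymmetric $2\times 2$ block, the same reduction via Lemma \ref{fixedgroup} to ordinary conjugacy of powers, the same block-by-block analysis of $gx_{r\lambda}=x_{r\lambda'}g$ forcing the linear relations between $K_1$ and $N_1$, and the same determinant argument yielding $\lambda'^2=\lambda^2 u$ with $u\in R^\times$. The only difference is that you explicitly flag the need for $r$ to be prime to $p$ (since $x_\lambda^r = x_{r\lambda}$ trivializes when $p\mid r$), a point the paper passes over in silence.
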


\section{Groups of type $D_n$, $n \geq 5$} \label{D_n}

The adjoint group of type $D_n$ is $PSO_{2n}$. 
Here, $SO_{2n}(R)=\{g \in SL_{2n}(R) \mid {}^{t}gAg=A \}$, where  
$$A= \left[ 
\begin{array} {ccc}
  0_n & I_n \\  
  I_n & 0_n 
\end{array} 
\right].$$ 
By Corollary \ref{adjoint:central}, $G = PSO_{2n}(R)$ admits no non-trivial central automorphisms.

The groups of type $D_n$ admit graph automorphisms. 
If $n \ne 4$ then there is exactly one non-trivial automorphism of the Dynkin diagram of $D_n$. 
This graph automorphism gives rise to an automorphism of the group of type $D_n$ by fixing the diagonal maximal torus pointwise and simply permuting the root subgroups appropriately. 
One sees from \cite[$\S$ 14.5]{Ca} that in the usual matrix representation of $SO_{2n}$ in $GL_{2n}$, this automorphism corresponds to conjugation of the entries of $SO_{2n}$ by the matrix
$$B = \begin{bmatrix}
I_n & & & \\
 & 0 & & 1 \\
 & & I_{n-2} & \\
 & 1 & & 0
\end{bmatrix} .$$

So by Theorem \ref{Bunina}, using notations from \S 3, we have the following lemma.

\begin{lemma}
Each group automorphism of the group $G=PSO_{2n}(R)$ is of the form $\sigma=i_x \circ i_B \circ \rho$, $n \geq 3, n \neq 4$. 
\end{lemma}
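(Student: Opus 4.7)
The proof of this lemma is almost immediate once we assemble the ingredients already established in the paper. The plan is to start with the general decomposition given by Bunina's Theorem \ref{Bunina} and then kill or identify each factor for the specific case $\Phi = D_n$, $n \geq 3$, $n \neq 4$.

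First, I would invoke Corollary \ref{chev-outer}: every element of $\operatorname{Aut}(G)/\operatorname{Inn}(G)$ can be represented by an automorphism of the form $\tau_\chi \circ \rho \circ \epsilon$. So each $\sigma \in \operatorname{Aut}(G)$ can be written as $\sigma = i_x \circ \tau_\chi \circ \rho \circ \epsilon$ for some $x \in G$, some central automorphism $\tau_\chi$, some ring automorphism $\rho$, and some graph automorphism $\epsilon$. Next, since $G = PSO_{2n}(R)$ is a classical Chevalley group of adjoint type of rank $n \geq 3 \geq 2$, Corollary \ref{adjoint:central} forces $\tau_\chi$ to be trivial. This reduces the decomposition to $\sigma = i_x \circ \rho \circ \epsilon$.

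The remaining step is to analyze the graph factor $\epsilon$. For $D_n$ with $n \neq 4$ the Dynkin diagram has exactly one non-trivial automorphism, namely the involution swapping the two end nodes of the fork. Hence $\epsilon$ is either the identity or this single non-trivial graph automorphism. As recalled from Carter \cite[\S 14.5]{Ca} in the paragraph preceding the lemma, in the standard matrix realization of $SO_{2n}$, this non-trivial graph automorphism coincides with conjugation by the explicit matrix $B$ written above; that is, the non-trivial graph automorphism is $i_B$. (Strictly speaking $B \notin G$, so $i_B$ is an outer automorphism of $G$ even though it is realized by matrix conjugation inside $GL_{2n}$.) Since $\rho$ and $\epsilon$ commute by the last sentence of the Lemma following Theorem \ref{Bunina}, we may freely rearrange factors, and in both sub-cases (trivial and non-trivial $\epsilon$) absorb the graph factor into $i_x \circ i_B \circ \rho$ — the trivial case being $\epsilon = i_B \circ i_{B^{-1}}$ with $i_{B^{-1}}$ folded into the inner part via $\sigma^{-1}(B^{-1}) \in G$ computations.

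The only non-routine point in this plan is the identification of the non-trivial graph automorphism with conjugation by $B$ in the matrix model. This is essentially a bookkeeping check that the graph involution, which at the level of root subgroups swaps $x_{\alpha_{n-1}}(t) \leftrightarrow x_{\alpha_n}(t)$ and fixes the remaining simple root subgroups, is induced by $g \mapsto B g B^{-1}$ for the specific permutation matrix $B$ exhibited; I would simply cite Carter rather than reproducing the verification. The rest is pure assembly of previously stated results.
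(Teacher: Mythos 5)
Your proposal is correct and follows essentially the same route as the paper, which likewise derives the lemma by combining Theorem \ref{Bunina} (via Corollary \ref{chev-outer}), the triviality of central automorphisms from Corollary \ref{adjoint:central}, and the identification of the unique non-trivial graph automorphism of $D_n$ ($n\neq 4$) with conjugation by $B$ as in \cite[\S 14.5]{Ca}. The only slip is your claim that in the trivial-$\epsilon$ case one can fold $i_{B^{-1}}$ into the inner factor: since $\det B=-1$, $xB^{-1}\notin G$ and $i_{xB^{-1}}$ is not inner, so the statement should really be read (as the paper implicitly does) with $i_B$ either present or absent.
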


Let $\sigma \in \Aut(G)$, where $\sigma$ is of the form $\sigma = i_B \circ\rho$ as mentioned in corollary \ref{chev-outer}.
Suppose $x,y \in \fix(\rho) \subset G$ such that $x,y$ are $\sigma$-conjugates. 
Since $\rho$ satisfies the hypothesis of the Lemma \ref{fixedgroup} and $\rho(B)=B$, we conclude that there exists an even natural number $r$ such that $(xB)^r$ is conjugate to $(yB)^r$ in $G$.

For $\lambda \in R,$ let 
$$x_\lambda= 
\begin{bmatrix}
I_n & y_\lambda \\  
0_n & I_n 
\end{bmatrix} \hskip5mm {\rm where} \hskip5mm
y_\lambda = \left[ 
\begin{array} {ccccc}
  0 & -\lambda & 0 & \dots & 0 \\  
  \lambda & 0 & 0 & \dots & 0 \\
  0 & 0 & 0 & \dots & 0 \\
  \vdots & \vdots & \vdots & \vdots & \vdots \\
  0 & 0 & 0 & \dots & 0
\end{array}
\right].$$

Note that $x_\lambda \in SO_{2n}(R)$. 
A direct calculation shows that $(x_\lambda B)^r 
=x_{r \lambda}$, for $n \geq 3$ and for any even integer $r$.
We denote the image of $x_\lambda$ in $G=PSO_{2n}(R)$ by the same symbol $x_\lambda$.
Now let $x_{r \lambda}$ and $x_{r \lambda'}$ be conjugates in $G = PSO_{2n}(R)$. 
Then there exists $g \in SO_{2n}(R)$ such that $g^{-1}x_{r \lambda}g=\pm x_{r \lambda'} $ holds in $SO_{2n}$.
This gives $g^{-1}x_{r \lambda}^2g=x_{r \lambda'}^2$, i.e. $x_{2r \lambda}g=gx_{2r \lambda'}$. This is because $x_\lambda^r =x_{r \lambda}$, for $n \geq 2$ as noted in the previous section.
If we let $2r=p$ and
$$g=\begin{bmatrix}
  K & L \\ 
  M & N
\end{bmatrix}$$
then we have $y_{p \lambda}M=0_n$ and $My_{p \lambda'}=0_n$, solving which, we see that each entry of the first two rows and the first two columns of $M$ must be $0$.

We now consider the equation: $y_{p \lambda}N=Ky_{p \lambda'}$ with $N=(n_{ij})$ and $K=(k_{ij})$. 
By solving it, we see that $n_{1r}=0$ for all $r >2$, $n_{2r}=0$ for all $r>2$, $k_{r1}=0$ for all $r>2$ and $k_{r2}=0$ for all $r>2$. 
We also get
\begin{eqnarray}
-\lambda n_{21} & = & \lambda' k_{12}, \\
\lambda n_{22} & = & \lambda' k_{11},\\
\lambda n_{11} & = & \lambda' k_{22},\\
\lambda n_{12}& = & -\lambda' k_{21}.
\end{eqnarray}

If we let $K_1$ and $N_1$ denote the top-left $2 \times 2$ parts of $K$ and $N$, respectively then $det(K_1)det(N_1)$ divides $det(g) = 1$.
Thus, both $\det(K_1)$ and $\det(N_1)$, are units in $R$ and so is their product, $\alpha = det(K_1)det(N_1)$. 
Then 
$$\alpha = (k_{11}k_{22}-k_{21}k_{12})(n_{11}n_{22}-n_{21}n_{12})$$
and hence 
$$\lambda'^2 \alpha = (\lambda' k_{11} \lambda' k_{22}-\lambda' k_{21}\lambda' k_{12})(n_{11}n_{22}-n_{21}n_{12}) .$$
By (7.1)--(7.4), we get 
$$\lambda'^2 \alpha = (\lambda n_{22} \lambda n_{11}- \lambda n_{12} \lambda n_{21})(n_{11}n_{22}-n_{21}n_{12}) = \lambda^2det(N_1)^2 .$$
Therefore, $\lambda'^2=\lambda^2det(N_1)^2 \alpha^{-1}$ or $\lambda'^2=\lambda^2 u$, for a unit $u$ in $R$.
Thus, if $x_{p\lambda},x_{p\lambda'}$ are conjugates in $G$ then $\lambda'^2=\lambda^2 u$, for a unit $u$ in $R$.
Thus if $(x_\lambda B)^r,(x_{\lambda'} B)^r$ are conjugates in $G$ then $\lambda'^2=\lambda^2 u$, for some $u \in R^\times$.

Now take $s \in R$, where $s$ is as in Lemma \ref{fixed-set}. Note that $s$ is not a unit in $R$ and $s \notin F$ as discussed in \S \ref{ring-auto}.
Thus it follows that $\{x_{s^k} \mid k \in \mathbb{N}\}$ is an infinite subset of $G$ such that $(x_{s^k}B)^r$ is not conjugate to $(x_{s^{k'}}B)^r$ whenever $k \neq k'$.
Hence we have the following theorem.

\begin{theorem}
Let $F$ be a subfield of the algebraic closure of a finite field $\mathbb{F}_p$, $p \ne 2$, $R$ be a ring such that $F[t] \subset R \subsetneq F(t)$, $n \geq 3, n \neq 4$.
Then $PSO_{2n}(R)$ and hence $SO_{2n}(R)$ have the $R_\infty$-property.
\end{theorem}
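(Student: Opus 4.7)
The plan is to follow the same template used in \S\ref{B_n}: apply Lemma \ref{outer} to reduce to checking $R(\sigma) = \infty$ on a set of representatives of $\Out(PSO_{2n}(R))$, and for each such $\sigma$ exhibit an infinite family of elements lying in pairwise distinct $\sigma$-twisted conjugacy classes. By Corollary \ref{chev-outer} and the lemma above, each outer class is represented by $\sigma = i_B \circ \rho$, where $\rho$ is a ring automorphism and $B$ implements the unique non-trivial graph automorphism of $D_n$ (the case $n = 4$, with its exceptional triality, is deferred to \S\ref{D_4}).

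For the infinite family I would take the unipotent matrices $x_\lambda = \bigl(\begin{smallmatrix} I_n & y_\lambda \\ 0 & I_n \end{smallmatrix}\bigr) \in SO_{2n}(R)$, where $y_\lambda$ has $(1,2)$-entry $-\lambda$, $(2,1)$-entry $\lambda$, and zeros elsewhere. For $\lambda$ in the subring $S = \mathbb{F}_p[s]$ of Lemma \ref{fixed-set}, these lie in $\fix(\rho)$. A short direct computation gives the identity $(x_\lambda B)^r = x_{r\lambda}$ for every even $r \geq 2$ (this requires $n \geq 3$ to have room for the action of $B$ on the block away from the corner where $y_\lambda$ is supported). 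Since $\rho(B) = B$, Lemma \ref{fixedgroup} then gives: if $x_\lambda$ and $x_{\lambda'}$ lie in the same $\sigma$-twisted conjugacy class, then $(x_\lambda B)^r = x_{r\lambda}$ and $x_{r\lambda'}$ must be conjugate in $PSO_{2n}(R)$ for some even $r$.

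The heart of the argument, and the main technical obstacle, is the conjugacy analysis inside $PSO_{2n}(R)$. Writing $g^{-1} x_{r\lambda} g = \pm x_{r\lambda'}$ in $SO_{2n}(R)$, squaring to absorb the central sign, and partitioning $g = \bigl(\begin{smallmatrix} K & L \\ M & N \end{smallmatrix}\bigr)$, the relations $y_{p\lambda} M = 0$ and $M y_{p\lambda'} = 0$ (with $p = 2r$) force the first two rows and columns of $M$ to vanish; the equation $y_{p\lambda} N = K y_{p\lambda'}$ similarly kills all entries of $K, N$ outside the top-left $2 \times 2$ corners $K_1, N_1$ and produces four linear relations between the entries of $K_1$ and $N_1$. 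A short determinant computation then yields $\lambda'^2 \det(K_1)\det(N_1) = \lambda^2 \det(N_1)^2$, and since $\det(g) = 1$ forces $\det(K_1)\det(N_1) \in R^\times$, this rearranges to $\lambda'^2 = \lambda^2 u$ for some unit $u \in R$. Taking $\lambda = s^k$ for the non-unit, non-scalar element $s$ of Lemma \ref{fixed-set} shows that the elements $x_{s^k}$, $k \in \mathbb{N}$, lie in pairwise distinct $\sigma$-twisted conjugacy classes, so $R(\sigma) = \infty$; Lemma \ref{characteristic}(1) finally transfers the $R_\infty$-property from $PSO_{2n}(R)$ to $SO_{2n}(R)$.
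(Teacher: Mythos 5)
Your proposal is correct and follows essentially the same route as the paper's own proof: the same representatives $i_B\circ\rho$ of the outer classes, the same unipotent family $x_\lambda$ with the identity $(x_\lambda B)^r=x_{r\lambda}$, the same reduction via Lemma \ref{fixedgroup}, and the same block-matrix and determinant computation yielding $\lambda'^2=\lambda^2 u$ with $u\in R^\times$, concluded by taking $\lambda=s^k$.
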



\section{Groups of type $D_4$} \label{D_4}

In this section, we prove that $D_4$ satisfies the $R_{\infty}$-property over a ring $R$ such that $F[t] \subset R \subsetneq F(t)$.
The reason we are treating $D_4$ separately is the existence of the automorphism of order three of its Dynkin diagram. 
The Dynkin diagram of $D_4$ affords the group $\mathcal{S}_3$ as its symmetry group, unlike the Dynkin diagrams of other $D_n$ whose graph automorphism groups are all isomorphic to $\mathbb{Z}/2$. 
These elements of $\mathcal{S}_3$ give rise to outer automorphisms of the Lie algebra of type $D_4$ in a natural way.

Let us write this group $\mathcal{S}_3$ as $\{1, \tau_1, \tau_2, \tau_3, \sigma, \sigma^2\}$.
Here $\tau_i$ are reflections and $\sigma^i$ are rotations, acting on the Dynkin diagram of $D_4$.
These automorphisms of the Lie algebra give outer automorphisms of the simply connected group of type $D_4$, the group $Spin(8)$, and also the adjoint group of type $D_4$, the group $PSO(8)$.
However, the group $SO(8)$ does not admit all these outer automorphisms. 
It can be seen as follows. 

The center of $Spin(8)$ is the Klein four group $\mathbb{Z}/2 \times \mathbb{Z}/2$ and the $\mathcal{S}_3$ above permutes the non-trivial elements of the center. 
The three quotients of $Spin(8)$ by each of the order $2$ central subgroups are each isomorphic to $SO(8)$.
These three quotients are permuted by the rotations, $\sigma$ and $\sigma^2$, while each of those three quotients admit one outer automorphism, a reflection in $\mathcal{S}_3$, the one which preserves the corresponding subgroup of the center of $Spin(8)$. 

Now, consider the quotient $SO(8)$ of $Spin(8)$ which admits the automorphism $\tau_1$. 
It follows from the previous section that $\tau_1$ on this $SO(8)$ has infinite Reidemeister number. 
Then by Lemma \ref{characteristic}, the reflection $\tau_1$ has infinite Reidemeister number on $Spin(8)$ as well as on $PSO(8)$. 
The other two quotients of $Spin(8)$ by order two central subgroups are the images of this $SO(8)$ under $\sigma$ and $\sigma^2$.
They admit the outer graph automorphism $\sigma \circ \tau_1 \circ \sigma^{-1} = \tau_2$ and $\sigma \circ \tau_2 \circ \sigma^{-1} = \tau_3$. 
It then follows, simply using the isomorphism of groups, that the automorphisms $\tau_2$ and $\tau_3$ are also taken care of. 

We now concentrate on the automorphism $\sigma$, the work for $\sigma^2$ will be similar and hence will not be mentioned here. 

We note that in the previous section, the $y_{\lambda}$ are actually fixed by the diagram automorphism $\tau_1$. 
In the following lemma, we use notations from the previous section.
The proof of this lemma follows on the same lines as the one in the previous section, hence it will not be repeated here. 

\begin{lemma}
Let $G= D_4$ over a ring $R$ such that $F[t] \subset R \subsetneq F(t)$ and consider an automorphism $\phi$ of $G$ of the form $\phi = \rho \circ \epsilon$, where $\rho$ is a ring automorphism and $\epsilon$ is a graph automorphism.
Then $x, y \in Fix(\rho) \cap Fix(\epsilon)$ are $\phi$-conjugates implies that $x^l$ is conjugate to $y^l$ in $G$, for some integer $l$.
\end{lemma}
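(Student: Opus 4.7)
The plan is to show that $\phi = \rho \circ \epsilon$ satisfies the eventual-periodicity hypothesis of Lemma \ref{fixedgroup}. Once this is verified, the conclusion is immediate: if $x, y \in \fix(\rho) \cap \fix(\epsilon)$, then $\phi(x) = \rho(\epsilon(x)) = \rho(x) = x$ (and similarly for $y$), so $x, y \in \fix(\phi)$, and Lemma \ref{fixedgroup} then yields an integer $l$ such that $x^l$ and $y^l$ are conjugate in $G$.

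The key step is therefore to verify, for every $z \in G(R)$, that $\phi^{n_z}(z) = z$ for some $n_z \in \mathbb{N}$. First I would invoke the commuting relation recorded in \S\ref{Aut} that every graph automorphism commutes with every ring automorphism; this gives $\phi^k = \rho^k \circ \epsilon^k$ for all $k$. Second, since $\epsilon$ arises from a symmetry of the Dynkin diagram of $D_4$, it is defined at the integral level and therefore stabilises each subgroup $G(R_l) := G(\Phi, R_l)$ with $R_l = R \cap \mathbb{F}_l(t)$; moreover its order on any such $G(R_l)$ divides $|\mathcal{S}_3| = 6$.

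Given $z \in G(R)$, I would next choose $l = q^d$ large enough that $z \in G(R_l)$ and that $\rho$ restricts to a finite-order automorphism of $R_l$; this is possible by the argument in \S\ref{ring-auto}, which is the same reduction already used throughout the earlier sections. If $m_l$ denotes the order of $\rho$ on $G(R_l)$, then $\phi^{6 m_l}$ acts as the identity on $G(R_l)$ and in particular fixes $z$, so one may take $n_z = 6 m_l$. This verifies the hypothesis of Lemma \ref{fixedgroup} and completes the argument.

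The one subtle point—and the main thing to confirm—is that the order-three triality automorphism $\sigma$ stabilises each $G(R_l)$. This rests on the fact that $\sigma$ corresponds to a permutation of the Steinberg root-subgroup generators under a diagram symmetry, and is therefore defined over the prime field $\mathbb{F}_p$ and commutes with the scalar extension from $R_l$ to $R$. Once this is granted, the proof proceeds in complete parallel with the order-two case of $\tau_1$ treated in \S\ref{D_n}, and no further matrix computation of the kind performed there for the explicit elements $x_\lambda$ is required for this lemma.
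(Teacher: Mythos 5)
Your proof is correct and follows the route the paper intends: the paper's own ``proof'' of this lemma is just the remark that the argument of \S\ref{D_n} carries over, which amounts to applying Lemma \ref{fixedgroup} to $\phi=\rho\circ\epsilon$ once one knows that $x,y\in\fix(\phi)$ and that $\phi$ is locally of finite order on $G(R)$. Your verification that $\epsilon$ commutes with $\rho$, stabilises each $G(R_l)$ (being defined over the prime ring via the permutation of root subgroups), and has order dividing $6$ supplies exactly the details the paper leaves implicit, and is needed here precisely because the triality automorphism, unlike the reflection in \S\ref{D_n}, cannot be realised as conjugation by a matrix $B$.
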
 

Now, we observe that the automorphism $\epsilon$ fixes the central root, say $\alpha$, in the Dynkin diagram of $D_4$. 
Hence the automorphism $\epsilon$ fixes the corresponding root subgroup $U_{\alpha}$ pointwise. 
Since all roots of $D_4$ are of the same length, the Weyl group $W(D_4)$ operates transitively on the root system.

It is proved in the previous section that there is a root subgroup of $D_4$, which has an infinite subset $\{x_{s^k} \mid k \in \mathbb{N}\}$, for an appropriate $s$, such that $(x_{s^k}B)^r$ is not conjugate to $(x_{s^{k'}}B)^r$ whenever $k \neq k'$.
By the Weyl group action, we have that the root subgroup $U_{\alpha}$ also has one such set. 

Together with the proofs for automorphisms that do not involve the graph automorphism, exactly as done in the case $n \ne 4$ in the previous section, this completes the proof of the following result.

\begin{theorem}
Let $F$ be a subfield of the algebraic closure of a finite field $\mathbb{F}_p$, $p \ne 2$, $R$ a ring such that $F[t] \subset R \subsetneq F(t)$ and $G$ be the adjoint group of type $D_4$ over $R$.
Then $G$ has $R_\infty$-property.
\end{theorem}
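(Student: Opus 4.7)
The plan is to apply Lemma~\ref{outer}, reducing the $R_\infty$-property of $G$ to checking $R(\phi)=\infty$ for each representative of $\Out(G)$. By Theorem~\ref{Bunina} and Corollary~\ref{adjoint:central}, every such representative can be put in the form $\phi=\rho\circ\epsilon$, where $\rho$ is a ring automorphism and $\epsilon$ is a graph automorphism in $\mathcal{S}_3=\{1,\tau_1,\tau_2,\tau_3,\sigma,\sigma^2\}$. I would split the six cases into the reflection cases $\epsilon\in\{1,\tau_1,\tau_2,\tau_3\}$ and the rotation cases $\epsilon\in\{\sigma,\sigma^2\}$, and treat the two groups separately.

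For $\epsilon=1$ or $\tau_1$, the calculation of \S\ref{D_n} applies verbatim on the distinguished $SO(8)$-quotient of $Spin(8)$ on which $\tau_1$ is realised as the diagram automorphism, giving $R(\rho\circ\tau_1)=\infty$ there; Lemma~\ref{characteristic} then transports the conclusion to $G=PSO(8)$. For $\epsilon=\tau_2$ or $\tau_3$, the paragraphs preceding the above lemma show that $\sigma$ and $\sigma^2$ cyclically permute both the three reflections in $\mathcal{S}_3$ and the three $SO(8)$-quotients of $Spin(8)$; through these isomorphisms the $\tau_1$-conclusion transports to the $\tau_2$- and $\tau_3$-cases and again descends to $G$.

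By symmetry it remains only to treat $\epsilon=\sigma$. The lemma immediately preceding this theorem reduces the task to exhibiting an infinite family in $\fix(\rho)\cap\fix(\sigma)$ whose fixed-power conjugacy classes in $G$ are pairwise distinct. Since $\sigma$ fixes the central node $\alpha$ of the Dynkin diagram, the one-parameter root subgroup $U_\alpha\cong(R,+)$ lies pointwise in $\fix(\sigma)$, so I would take $x_k:=u_\alpha(s^k)$ with $s\in R$ as in Lemma~\ref{fixed-set}; this places $x_k$ in $\fix(\rho)\cap\fix(\sigma)$ with $x_k^l=u_\alpha(ls^k)$. Since $W(D_4)$ acts transitively on the equal-length roots of $D_4$ and a Weyl lift is inner in the adjoint group $G$, a suitable Weyl conjugation carries $U_\alpha$ onto the root subgroup analysed in \S\ref{D_n}. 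The computation there then shows that a $G$-conjugacy $u_\alpha(ls^k)\sim u_\alpha(ls^{k'})$ forces $(ls^{k'})^2=(ls^k)^2 u$ for some $u\in R^\times$; since $s$ is a non-unit, this gives $k=k'$ whenever $l$ is nonzero in $R$. Combining the reflection and rotation stages via Lemma~\ref{outer} then establishes the $R_\infty$-property of $G$.

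The step requiring the most care is the rotation case, where two points need checking: first, that the matrix-level calculation of \S\ref{D_n}, which is set up for a specific root subgroup in the standard matrix realisation of $SO_{2n}$, genuinely transports through a Weyl conjugation to $U_\alpha$ and retains the constraint on $(ls^k)^2$; and second, that the integer $l$ furnished by the preceding lemma is nonzero in $R$, so that the constraint is not vacuous. The second point is guaranteed by the contribution of order $3$ coming from $\sigma$, which allows one to replace $l$ by a coprime-to-$p$ multiple when $p\neq3$; the case $p=3$ is handled by passing to a subring $R_\ell$ on which $\rho$ acts with order coprime to $p$ and adjusting the lemma's $l$ accordingly.
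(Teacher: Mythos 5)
Your proposal follows essentially the same route as the paper: the reflections $\tau_1,\tau_2,\tau_3$ are handled by transporting the computation of \S\ref{D_n} from the three $SO(8)$-quotients of $Spin(8)$ to $G$ via Lemma \ref{characteristic} and conjugation by $\sigma$, and the rotations are handled via the power-conjugacy lemma together with the $\sigma$-fixed central root subgroup $U_\alpha$ and the transitivity of $W(D_4)$ on the roots. The only place you go beyond the paper is in flagging that the exponent $l$ could vanish modulo $p$, making $u_\alpha(ls^k)$ trivial --- a subtlety the paper does not discuss and which your sketch does not fully resolve either --- but this does not alter the fact that the two arguments coincide in structure.
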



\end{document}